\begin{document}
\setlength{\baselineskip}{16pt}

\parindent 0.5cm
\evensidemargin 0cm \oddsidemargin 0cm \topmargin 0cm \textheight
22cm \textwidth 16cm \footskip 2cm \headsep 0cm

\newtheorem{theorem}{Theorem}[section]
\newtheorem{lemma}{Lemma}[section]
\newtheorem{proposition}{Proposition}[section]
\newtheorem{definition}{Definition}[section]
\newtheorem{example}{Example}[section]
\newtheorem{corollary}{Corollary}[section]

\newtheorem{remark}{Remark}[section]

\numberwithin{equation}{section}

\def\p{\partial}
\def\I{\textit}
\def\R{\mathbb R}
\def\C{\mathbb C}
\def\u{\underline}
\def\l{\lambda}
\def\a{\alpha}
\def\O{\Omega}
\def\e{\epsilon}
\def\ls{\lambda^*}
\def\D{\displaystyle}
\def\wyx{ \frac{w(y,t)}{w(x,t)}}
\def\imp{\Rightarrow}
\def\tE{\tilde E}
\def\tX{\tilde X}
\def\tH{\tilde H}
\def\tu{\tilde u}
\def\d{\mathcal D}
\def\aa{\mathcal A}
\def\DH{\mathcal D(\tH)}
\def\bE{\bar E}
\def\bH{\bar H}
\def\M{\mathcal M}
\renewcommand{\labelenumi}{(\arabic{enumi})}

\def\disp{\displaystyle}
\def\undertex#1{$\underline{\hbox{#1}}$}
\def\card{\mathop{\hbox{card}}}
\def\sgn{\mathop{\hbox{sgn}}}
\def\exp{\mathop{\hbox{exp}}}
\def\OFP{(\Omega,{\cal F},\PP)}
\newcommand\JM{Mierczy\'nski}
\newcommand\RR{\ensuremath{\mathbb{R}}}
\newcommand\CC{\ensuremath{\mathbb{C}}}
\newcommand\QQ{\ensuremath{\mathbb{Q}}}
\newcommand\ZZ{\ensuremath{\mathbb{Z}}}
\newcommand\NN{\ensuremath{\mathbb{N}}}
\newcommand\PP{\ensuremath{\mathbb{P}}}
\newcommand\abs[1]{\ensuremath{\lvert#1\rvert}}

\newcommand\normf[1]{\ensuremath{\lVert#1\rVert_{f}}}
\newcommand\normfRb[1]{\ensuremath{\lVert#1\rVert_{f,R_b}}}
\newcommand\normfRbone[1]{\ensuremath{\lVert#1\rVert_{f, R_{b_1}}}}
\newcommand\normfRbtwo[1]{\ensuremath{\lVert#1\rVert_{f,R_{b_2}}}}
\newcommand\normtwo[1]{\ensuremath{\lVert#1\rVert_{2}}}
\newcommand\norminfty[1]{\ensuremath{\lVert#1\rVert_{\infty}}}

\title{Effects of localized spatial variations on the uniform persistence and spreading speeds of time periodic two species competition systems}

\author{Liang Kong and Tung Nguyen\\
Department of Mathematical Sciences\\
University of Illinois at Springfield\\
Springfield, IL 62703\\
\\
and\\
\\
Wenxian Shen\\
Department of Mathematics and Statistics\\
Auburn University\\
Auburn University, AL 36849\\
U.S.A.}

\date{}
\maketitle
\noindent{\bf Abstract.} The current paper is devoted to the study of two species competition systems of the form
\begin{equation*}
\begin{cases}
u_t(t,x)= \mathcal{A} u+u(a_1(t,x)-b_1(t,x)u-c_1(t,x)v),\quad x\in\RR\cr
v_t(t,x)= \mathcal{A} v+ v(a_2(t,x)-b_2(t,x)u-c_2(t,x) v),\quad x\in\RR
\end{cases}
\end{equation*}
where
$\mathcal{A}u=u_{xx}$,  or $(\mathcal{A}u)(t,x)=\int_{\RR}\kappa(y-x)u(t,y)dy-u(t,x)$
($\kappa(\cdot)$ is a smooth non-negative
 convolution kernel supported on an interval centered at the origin),  $a_i(t+T,x)=a_i(t,x)$, $b_i(t+T,x)=b_i(t,x)$, $c_i(t+T,x)=c_i(t,x)$, and
 $a_i$, $b_i$, and $c_i$ ($i=1,2$) are spatially homogeneous when $|x|\gg 1$, that is,
$a_i(t,x)=a_i^0(t)$, $b_i(t,x)=b_i^0(t)$, $c_i(t,x)=c_i^0(t)$ for some $a_i^0(t)$, $b_i^0(t)$, $c_i^0(t)$, and $|x|\gg 1$.
Such a system can be viewed as a time periodic competition system subject to certain localized spatial variations.
We, in particular,  study the effects of localized spatial variations on the uniform persistence and spreading speeds of the system.
Among others, it is proved that any localized spatial variation   does not affect  the
uniform persistence of the system,   does not slow down the spreading speeds of the system, and
under some linear determinant condition,   does not speed up the spreading speeds.

\medskip

\noindent {\bf Key words.} Two species competition system, localized spatial variation, persistence, spreading speeds.
\medskip

\noindent \noindent {\bf Mathematics subject classification.}  35K58, 45G15,
 92D25.

\section{Introduction}

In this paper, we consider the effects of  localized spatial variations on the asymptotic
dynamics of the following two species competition system,
\begin{equation}
\label{main-eq1}
\begin{cases}
u_t(t,x)= \mathcal{A} u+u(a_1^0(t)-b_1^0(t)u-c_1^0(t)v),\quad x\in\RR\cr
v_t(t,x)= \mathcal{A} v+ v(a_2^0(t)-b_2^0(t)u-c_2^0(t) v),\quad x\in\RR,
\end{cases}
\end{equation}
where
$\mathcal{A}u=u_{xx}$, which is referred to as a {\it random dispersal operator}, or $(\mathcal{A}u)(t,x)=\int_{\RR}\kappa(y-x)u(t,y)dy-u(t,x)$
($\kappa(\cdot)$ is a smooth non-negative
 convolution kernel supported on an interval centered at the origin and $\int_{\RR}\kappa(z)dz=1$), which is referred to as a {\it nonlocal dispersal operator},
and $a_i^0,b_i^0,c_i^0$ $(i=1,2)$ are positive H\"older continuous periodic functions with period $T$.
To be more precise, consider
 \begin{equation}
\label{main-eq2}
\begin{cases}
u_t(t,x)= \mathcal{A} u+u(a_1(t,x)-b_1(t,x)u-c_1(t,x)v),\quad x\in\RR\cr
v_t(t,x)= \mathcal{A} v+ v(a_2(t,x)-b_2(t,x)u-c_2(t,x) v),\quad x\in\RR,
\end{cases}
\end{equation}
where $a_i(t+T,x)=a_i(t,x)$, $b_i(t+T,x)=b_i(t,x)>0$, $c_i(t+T,x)=c_i(t,x)>0$, and
$a_i(t,x)=a_i^0(t)$, $b_i(t,x)=b_i^0(t)$, $c_i(t,x)=c_i^0(t)$ for $|x|\gg 1$.
System \eqref{main-eq2} can be viewed as a localized spatially perturbed system of \eqref{main-eq2}.
We study the asymptotic dynamics of \eqref{main-eq2}, including the persistence, coexistence, and invasion speeds,
and investigate the similarities and differences between \eqref{main-eq1} and \eqref{main-eq2}, which reflect the effects of localized spatial variation on the asymptotic dynamics of \eqref{main-eq1}.

Systems \eqref{main-eq1} and \eqref{main-eq2}  are used to model the population dynamics of two competing species.
Various  temporal and spatial variations exist in almost all real world problems.
System \eqref{main-eq1} takes certain seasonal temporal variation of the underlying environment into consideration. System \eqref{main-eq2}
further takes some localized special variation of the underlying environment into consideration.
Important dynamical issues about \eqref{main-eq1} and \eqref{main-eq2} include persistence, coexistence, extinction, spatial spreading, etc.
They have been studied in many papers and are quite well understood for \eqref{main-eq1}.
See \cite{Bao2013,Conley1984,Dunbar1983,Fang2014,Fang2015,Guo2011,HeSh,Hosono1998,Huang2010,Kan1995,Kan1997,LeLiWe,YuZh,Zhao2011}, etc.  for the case that $\mathcal{A} u= u_{xx}$ and see \cite{Bao2015,Fang2014,HeNgSh,KoRaSh,LZZ2015,NgRa}, etc.  for the case $\mathcal{A}u(t,x)=\int_{\RR} \kappa(y-x)u(t,y)dy-u(t,x)$.
But the study on the asymptotic dynamics of two species competition systems in  both temporal and spatial heterogeneous environments is not much.
 The objective of the current paper is to study  the asymptotic dynamics of \eqref{main-eq2},
 in particular, the effects of the localized spatial variation on the asymptotic dynamics of \eqref{main-eq1}. The reader is referred to \cite{BeHaRo, KoSh, KoSh1} for the study of localized spatial variations on the asymptotic dynamics of one species population models.

To roughly state the problems to be studied in this paper and the main results of this paper, we first give a brief review about the asymptotic dynamics
of \eqref{main-eq1}. To this end,
set
\begin{equation}
\label{coefficient-eq}
\begin{cases}
a_{iL(M)}^0=\inf_{t\in\RR} (\sup_{t\in\RR})a_i^0(t)\cr b_{iL(M)}^0=\inf_{t\in\RR} (\sup_{t\in\RR})b_i^0(t)\cr c_{iL(M)}^0=\inf_{t\in\RR} (\sup_{t\in\RR})c_i^0(t)
\end{cases}
\end{equation}
for $i=1,2$. Let (H0) be the following standing assumption.

\medskip

\noindent {\bf (H0)} {\it $a_{iL}^0>0$, $b_{iL}^0>0$, and $c_{iL}^0>0$ for $i=1,2$.}

\medskip

\noindent Throughout this paper, we assume (H0). The following results on the persistence, coexistence, and extinction in \eqref{main-eq1}  have been proved in literature
(see, for example, \cite{HeSh},  \cite{KoRaSh}, \cite{NgRa}).

\begin{itemize}
\item[$\bullet$] (semi-trivial solutions) {\it Then \eqref{main-eq1} has  two spatially homogeneous
 and time  $T$-periodic semitrivial solutions $(u_0^*(t),0)$ and $(0,v_0^*(t))$ $(\inf_{t\in\RR}u_0^*(t)>0$ and $\inf_{t\in\RR}v_0^*(t)>0)$.
}

\item[$\bullet$] (Persistence and coexistence)
{\it   If $a_{1L}^0>\frac{c_{1M}^0a_{2M}^0}{c_{2L}^0}$ and
$a_{2L}^0>\frac{b_{2M}^0a_{1M}^0}{b_{1L}^0}$, then \eqref{main-eq1} has spatially homogeneous and time $T$-periodic coexistence state $(u_0^{**}(t),v_0^{**}(t))$, i.e., $\inf_{t\in\RR}u_0^{**}(t)>0$ and $\inf_{t\in\RR}v_0^{**}(t)>0$.
}

\item[$\bullet$] (Extinction) {\it If $a_{1L}^0> \frac{c_{1M}^0a_{2M}^0}{c_{2L}^0}$ and $a_{2M}^0< \frac{a_{1L}^0b_{2L}^0}{b_{1M}^0}$,
  then the semitrivial equilibrium $(u^*_0(t),0)$
of \eqref{main-eq1} is
globally stable.
}

\item[$\bullet$] (Extinction) {\it   If $a_{1M}^0< \frac{c_{1L}^0a_{2L}^0}{c_{2M}^0}$  and  $a_{2L}^0> \frac{a_{1M}^0b_{2M}^0}{b_{1L}^0}$, then the semitrivial equilibrium $(0,v^*_0(t))$ of \eqref{main-eq1}
is globally stable.
}
\end{itemize}

In \cite{KoRaSh}, the spreading speeds of \eqref{main-eq1} are investigated and the following result is proved.

\begin{itemize}
\item[$\bullet$] (Spreading speed for \eqref{main-eq1}) {\it Assume  $a_{1L}^0> \frac{c_{1M}^0a_{2M}^0}{c_{2L}^0}$ and $a_{2M}^0< \frac{a_{1L}^0b_{2L}^0}{b_{1M}^0}$.  Then \eqref{main-eq1} has  a single  spreading or invasion speed $c_0^*$ from $(u_0^*,0)$ to $(0,v_0^*)$ (see section \ref{spreading-speeds-section} for the definition of spreading speeds).
    }
\end{itemize}

As mentioned in the above, the objective of the current paper is to study the effect of localized spatial variations of the coefficients of \eqref{main-eq1}
on its asymptotic dynamics, in particular, on its persistence and spreading speeds.  Along this direction, first we have  following results  from  \cite[Theorem 2.1(3),(4)]{KoSh1}.

\begin{itemize}
\item[$\bullet$] (Semi-trivial solutions of \eqref{main-eq2}) {\it  There are two time periodic semi-trivial solutions $(u^*(t,x),0)$ and $(0,v^*(t,x))$ of \eqref{main-eq2} with
$\lim_{|x|\to \infty} [u^*(t,x)-u_0^*(t)]=0$ and $\lim_{|x|\to\infty}[v^*(t,x)-v^*_0(t)]=0$} (see Proposition \ref{semi-trivial-solu-prop}).
\end{itemize}

Let (H1)-(H2)  be the following standing  assumptions.

\medskip

\noindent {\bf (H1)} {\it $a_{1L}^0> \frac{c_{1M}^0a_{2M}^0}{c_{2L}^0}$ and $a_{2M}^0< \frac{a_{1L}^0b_{2L}^0}{b_{1M}^0}$}.

\smallskip

\noindent {\bf (H2)} {\it
$a^0_1(t)-c^0_1(t)\frac{a^0_{2M}}{c^0_{2L}}-a^0_2(t)+2c^0_2 (t)\frac{a^0_{2L}}{c^0_{2M}}
-b^0_2(t)\frac{a^0_{2M}}{c^0_{2L}}\frac{c^0_{1M}}{b^0_{1L}}> 0$  and
 $a^0_1(t)-c^0_1(t)\frac{a^0_{2M}}{c^0_{2L}}-a^0_2(t)+2c^0_2 (t)\frac{a^0_{2L}}{c^0_{2M}}-b^0_2(t)\frac{a^0_{2M}}{c^0_{2L}}\frac{c^0_{2M}}{b^0_{2L}}> 0$ for all $t\in\RR$.
}

\medskip

Note that (H1) implies that $(0,v_0^*)$ is an unstable solution of \eqref{main-eq1} and $(u_0^*,0)$ is a globally stable solution of
\eqref{main-eq1}.

Let $\lambda(\mu, a_1^0-c_1^0v_0^*)$ be the principal
spectrum point associated to the following linear equation,
\begin{equation}
\label{eigenvalue-eq0}
u_t=\mathcal{A}(\mu)u+(a_1^0(t)-c_1^0(t)v_0^*(t))u(t,x),
\end{equation}
where
\begin{equation}
\label{a-mu-eq1}
\mathcal{A}(\mu)u= u_{xx}+\mu^2 u
\end{equation}
when $\mathcal{A}=u_{xx}$, and
\begin{equation}
\label{a-mu-eq2}
\mathcal{A}(\mu)=\int_{\RR}e^{-\mu(y-x)}\kappa(y-x)u(t,y)dy-u(t,x)
\end{equation}
when $\mathcal{A}u=\int_{\RR}\kappa(y-x)u(t,y)dy-u(t,x)$
(see section 2 for the definition of principal spectrum point).

As recalled in the above.  Assume (H1). Then \eqref{main-eq1} has  a single  spreading or invasion speed $c_0^*$ from $(u_0^*,0)$ to $(0,v_0^*)$.
    If, in addition, (H2) holds, then we also have (see \cite{KoRaSh})
    $$
    c_0^*=\inf_{\mu>0}\frac{\lambda(\mu, a_1^0-c_1^0v_0^*)}{\mu}.
    $$
    Note that the spreading speed $c_0^*$ of \eqref{main-eq1} depends on $(a_1^0,b_1^0,c_1^0,a_2^0,b_2^0,c_2^0)$.
 To indicate the dependence of $c_0^*$ on $(a_1^0,b_1^0,c_1^0,a_2^0,b_2^0,c_2^0)$, we may write it as $c_0^*(a_1^0,b_1^0,c_1^0,a_2^0,b_2^0,c_2^0)$. It is unknown whether  $c_0^*(a_1^0,b_1^0,c_1^0,a_2^0,b_2^0,c_2^0)$
 depends on $(a_1^0,b_1^0,c_1^0,a_2^0,b_2^0,c_2^0)$ continuously.

\medskip

 In this paper, among others, we prove

 \begin{itemize}
 \item[$\bullet$]  (Stability/instability of semi-trivial solutions of \eqref{main-eq2})
{\it If $(u_0^*(t),0)$ (resp. $(0,v_0^*(t))$) is unstable, then so is $(u^*(t,x),0)$ (resp. $(0,v^*(t,x))$)}  (see Theorem \ref{stability-semitrivial-solu-thm}). (Note, if $(u_0^*(t),0)$ (resp. $(0,v_0^*(t))$) is stable, $(u^*(t,x),0)$ (resp. $(0,v^*(t,x))$)  may not be stable, see Remark \ref{stable-rk}).

\item[$\bullet$] (Persistence and coexistence of \eqref{main-eq2}) {\it If both $(u_0^*(t),0)$ and $(0,v_0^*(t))$ are unstable, then persistence occurs in \eqref{main-eq2} and \eqref{main-eq2} has a time $T$-periodic coexistence state $(u^{**}(t,x),v^{**}(t,x))$
} (see Theorem \ref{peristence-thm}).

\item[$\bullet$] (Lower semi-continuity of the spreading speeds of \eqref{main-eq1})
   {\it Assume (H1) and $(a_1^n(t),b_1^n(t)$, $c_1^n(t),a_2^n(t),b_2^n(t),c_2^n(t))$ are continuous $T$-periodic functions and converge to $(a_1^0(t),b_1^0(t)$, $c_1^0(t),a_2^0(t),b_2^0(t),c_2^0(t))$ as $n\to\infty$ uniformly in $t\in\RR$. Then
     $$
     \liminf_{n\to\infty} c_0^*(a_1^n,b_1^n,c_1^n,a_2^n,b_2^n,c_2^n)\ge c_0^*(a_1^0,b_1^0,c_1^0,a_2^0,b_2^0,c_2^0).
     $$
     If, in addition, (H2) holds, then $c_0^*$ is continuous at $(a_1^0,b_1^0,c_1^0,a_2^0,b_2^0,c_2^0)$, that is,
     $$
     \lim_{n\to\infty} c_0^*(a_1^n,b_1^n,c_1^n,a_2^n,b_2^n,c_2^n)= c_0^*(a_1^0,b_1^0,c_1^0,a_2^0,b_2^0,c_2^0)
     $$
     }
     (see Theorem \ref{main-thm0}).

\item[$\bullet$] (Spreading speeds of \eqref{main-eq2}) {\it Assume (H1). Then
$$c_{\sup}^*\ge c_{\inf}^*\ge c_0^*,$$
where $[c_{\inf}^*,c_{\sup}^*]$ is the spreading speed interval of \eqref{main-eq2} (see section \ref{spreading-speeds-section} for the definition of spreading speed
interval).
If, in addition, assume (H2), then
$$c_{\sup}^*=c_{\inf}^*=c_0^*.$$
 } (see Theorem \ref{main-thm1}).
\end{itemize}

We remark that (H2) is a linear determinant condition for \eqref{main-eq1}.  If $a_i^0(t)$, $b_i^0(t)$, and $c_i^0(t)$ ($i=1,,2$) are independent of $t$, then (H2) becomes
\begin{equation*}
a_1^0+a_2^0-\frac{a_2^0c_1^0}{c_2^0}-\frac{a_2^0b_2^0c_1^0}{b_1^0c_2^0}> 0,\quad
a_1^0-\frac{a_2^0c_1^0}{c_2^0}> 0.
\end{equation*}
In particular, if
$$
a_1^0=r_1,\quad b_1^0=r_1,\quad c_1^0=\tilde a_1^0 r_1
$$
and
$$
a_2^0=r_2,\quad b_2^0=r_2\tilde a_2^0,\quad c_2^0=r_2
$$
with
$$\tilde a_1^0<1\le \tilde a_2^0,
$$
then
(H2) becomes
\begin{equation}
\label{HL2-eq2}
\frac{\tilde a_1\tilde a_2 -1}{1-\tilde a_1}\le \frac{r_1}{r_2},
\end{equation}
which is
 the same as the linear determinant condition for \eqref{main-eq1} in \cite[Theorem 2.1]{LeLiWe}.

The results stated in the above reveal several interesting scenarios, for example, localized spatial perturbation does not affect the instability of the
semitrivial solutions of \eqref{main-eq1};  localized spatial variation does not slow down the spreading speeds or invasion speeds of the stable species to the unstable species; and if the linear determinant condition (H2) holds,  then $c_0^*(a_1^0,b_1^0,c_1^0,a_2^0,b_2^0,c_2^0)$ is continuous with respect to the coefficients $(a_1^0,b_1^0,c_1^0,a_2^0,b_2^0,c_2^0)$ and any localized variation does not speed up the spreading speeds.
It remains open whether in general  $c_0^*(a_1^0,b_1^0,c_1^0,a_2^0,b_2^0,c_2^0)$ is continuous with respect to the coefficients $(a_1^0,b_1^0,c_1^0,a_2^0,b_2^0,c_2^0)$, and  whether in general localized spatial variation does not speed up the spreading speeds.

The rest of the paper is organized as follows. In section 2, we present some preliminary materials to be used in the proof of the main results.
 Section \ref{semitrivial-solution-section} is devoted to the study of semitrivial solutions of \eqref{main-eq2}.
We study the persistence, coexistence, and extinction
of \eqref{main-eq2} in section \ref{persistence-section}. In section \ref{spreading-speeds-section}, we investigate the spreading speeds of \eqref{main-eq2}.

\section{Preliminary}

In this section, we present some preliminary materials to be used in the proof of the main results,  including  some principal spectrum theory
for dispersal operators with time periodic dependence and comparison principle for competitive/cooperative systems.

\subsection{Principal spectrum theory}

In this subsection, we present some principal spectrum theory for dispersal operators with time periodic dependence
 to be used in later sections.

Let $a(t,x)$ be H\"older continuous in $t$ and continuous in $x$, $a(t+T,x)=a(t,x)$, and $\mu\ge 0$ be a given constant. Consider
\begin{equation}
\label{main-linear-eq1}
u_t=\mathcal{A}(\mu) u+a(t,x)u,\quad x\in\RR,
\end{equation}
where $\mathcal{A}(\mu)u$ is as in \eqref{a-mu-eq1} if $\mathcal{A}u=u_{xx}$ and $\mathcal{A}(\mu)u$ is as in \eqref{a-mu-eq2}
if $\mathcal{A}u=\int_{\RR}\kappa(y-x)u(t,y)dy-u(t,x)$.

Let
\begin{equation}
\label{space-x-eq1}
X=C_{\rm unif}^b(\RR)
\end{equation}
with norm $\|u\|=\sup_{x\in\RR}|u(x)|$,
\begin{equation}
\label{space-x-eq2}
X^+=\{u\in X\,|\, u(x)\ge 0\quad \forall\,\, x\in\RR\},
\end{equation}
and
\begin{equation}
\label{space-x-eq3}
X^{++}=\{u\in X^+\,|\, \inf_{x\in\RR} u(x)>0\}.
\end{equation}
For given $u_1,u_2\in X$, we define
$$
u_1\le u_2\quad {\rm if}\quad u_2-u_1\in X^+.
$$
Let $\Phi(t,s;\mu, a)$ be the solution operator of \eqref{main-linear-eq1} on $X$, that is, for any $u_0\in X$,
$u(t,\cdot;s,u_0):=\Phi(t,s;\mu, a)u_0$ is the solution of \eqref{main-linear-eq1} with $u(s,\cdot;s,u_0)=u_0(\cdot)$.

Observe that for any $u_0\in X^+$, $\Phi(t,s;\mu, a)u_0\in X^+$ for all $t\ge s$. Hence, for any $u_1,u_2\in X$ with
$u_1\le u_2$,
$$
\Phi(t,s;\mu,a)u_1\le \Phi(t,s;\mu,a)u_2\quad \forall \,\, t\ge s.
$$
Observe also that, if $a_1(t,x)\le a_2(t,x)$, then for any $u_0\in X^+$,
$$
\Phi(t,s;\mu,a_1)u_0\le\Phi(t,s;\mu,a_2)u_0\quad \forall\,\, t\ge s.
$$

\begin{definition}
\label{principal-spectrum-def}
$\lambda(\mu, a)=\limsup_{t-s\to\infty}\frac{\ln \|\Phi(t,s;\mu,a)\|}{t-s}$ is called the {\it principal spectrum point} or {\it generalized principal
eigenvalue} of \eqref{main-linear-eq1}.
\end{definition}

If no confusion occurs, we may write $\lambda(0,a)$ and $\Phi(t,s;0,a)$ as $\lambda(a)$ and $\Phi(t,s;a)$, respectively.

\begin{proposition}
\label{principal-spectrum-prop1} Let $a_0(t)$ be   H\"older continuous in $t$, $a_0(t+T)=a_0(t)$, and  $a(t,x)$ be H\"older continuous in $t$ and continuous in $x$, $a(t+T,x)=a(t,x)$.
\begin{itemize}
\item[(1)] $\lambda(\mu,a)=\frac{\ln r(\Phi(T,0;\mu,a))}{T}$, where $r(\Phi(T,0;\mu,a))$ is the spectral radius of $\Phi(T,0;\mu,a)$ on $C_{\rm unif}^b(\RR)$.

\item[(2)]   $\lambda(a_0)=\hat a_0:=\frac{1}{T}\int_0^T a_0(t)dt$.

\item[(3)] If $\lim_{|x|\to\infty}|a(t,x)-a_0(t)|=0$, then $\lambda(a)\ge \lambda(a_0)$.

\item[(4)] If $a(t,x)$ is also periodic in $x$ with period $p$ and is $C^1$, then there is a positive function $\phi(t,x)$, $\phi(t+T,x)=\phi(t,x+p)=\phi(t,x)$, such that
   $ \Phi(T,0;\mu,a)\phi(0,\cdot)=e^{\lambda(\mu,a)T}\phi(T,\cdot)$.
\end{itemize}
\end{proposition}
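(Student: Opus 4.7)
The plan is to prove the four items in order, using the cocycle structure of $\Phi(t,s;\mu,a)$, monotonicity of $\lambda$ in the zeroth-order coefficient, and (for item (4)) the Krein--Rutman theorem. Item (1) follows from the identity $\Phi(nT,0;\mu,a)=\Phi(T,0;\mu,a)^n$, which is a consequence of the $T$-periodicity of $a$ in $t$ combined with the cocycle property $\Phi(t,s;\mu,a)=\Phi(t,r;\mu,a)\Phi(r,s;\mu,a)$. For $t-s=nT+\tau$ with $\tau\in[0,T)$ the remainder factor is uniformly bounded, so
$$
\lambda(\mu,a)=\limsup_{n\to\infty}\frac{\ln\|\Phi(T,0;\mu,a)^n\|}{nT}=\frac{\ln r(\Phi(T,0;\mu,a))}{T}
$$
by Gelfand's spectral-radius formula. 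Item (2) reduces to checking that the constant $\mathbf{1}$ is annihilated by $\mathcal A$ (trivially for $u_{xx}$; using $\int_\RR\kappa=1$ for the nonlocal case), so $\Phi(T,0;0,a_0)\mathbf{1}=e^{T\hat a_0}\mathbf{1}$, giving $r(\Phi(T,0;0,a_0))\ge e^{T\hat a_0}$; the reverse inequality follows from factoring $\Phi(T,0;0,a_0)=e^{\int_0^T a_0}\Phi(T,0;0,0)$ and using that the pure dispersal semigroup is a positivity-preserving contraction on $X$.

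For (3), the strategy is to exhibit, for each $\epsilon>0$, a subsolution of $u_t=\mathcal A u+a(t,x)u$ whose growth rate is at least $\hat a_0-2\epsilon$. Choose $R$ with $a(t,x)\ge a_0(t)-\epsilon$ for $|x|\ge R$, and a smooth bump $\chi_n$ satisfying $\chi_n\equiv 1$ on $[2n,3n]$, $\chi_n\equiv 0$ outside $[n,4n]$, with $\|\mathcal A\chi_n\|_\infty=O(1/n)$ (in the nonlocal case using compact support of $\kappa$). The candidate $w_n(t,x)=e^{\int_0^t(a_0(s)-2\epsilon)\,ds}\chi_n(x)$ satisfies $\partial_t w_n\le\mathcal A w_n+a(t,x)w_n$ once $n$ is large enough that the $O(1/n)$ error is absorbed by the $\epsilon$ gap between $a_0(t)-\epsilon$ and $a(t,x)$ on the support $[n,4n]$. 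Comparison then yields $\|\Phi(t,0;a)\chi_n\|_\infty\ge\|w_n(t,\cdot)\|_\infty$, hence $r(\Phi(T,0;a))\ge e^{T(\hat a_0-2\epsilon)}$; sending $\epsilon\to 0$ and applying (1) concludes.

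For (4), spatial $p$-periodicity of $a$ together with translation invariance of $\mathcal A(\mu)$ implies that $P:=\Phi(T,0;\mu,a)$ restricts to a bounded positive operator on the closed subspace $Y\subset X$ of $p$-periodic continuous functions, identifiable with $C(\RR/p\ZZ)$. Compactness of $P|_Y$ follows from parabolic smoothing (random dispersal) or from the smoothness of $\kappa$ combined with Arzel\`a--Ascoli on the compact torus (nonlocal case). Krein--Rutman then produces $\psi\in Y\cap X^{++}$ with $P\psi=r(P|_Y)\psi$, and a sandwich argument (every $u\in X^+$ with $\|u\|=1$ is dominated by a multiple of $\psi$, since $\inf\psi>0$) shows $r(P|_X)=r(P|_Y)=e^{\lambda(\mu,a)T}$. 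Setting $\phi(t,x)=e^{-\lambda(\mu,a)t}\bigl(\Phi(t,0;\mu,a)\psi\bigr)(x)$ produces the asserted doubly periodic positive function: $p$-periodicity in $x$ because $\Phi$ preserves $Y$, and $T$-periodicity in $t$ because $\phi(T,\cdot)=e^{-\lambda T}P\psi=\psi=\phi(0,\cdot)$; the identity $\Phi(T,0;\mu,a)\phi(0,\cdot)=e^{\lambda(\mu,a)T}\phi(T,\cdot)$ is then immediate.

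I expect the main obstacle to be (3), specifically verifying the subsolution property of $w_n$ in the nonlocal case: the convolution $(\mathcal A\chi_n)(x)$ mixes values of $\chi_n$ within the kernel's range and does not vanish in the transition region of the cutoff, so the cutoff must be tuned carefully and the analysis near the support boundary requires extra care. An alternative route via principal Dirichlet eigenvalues on $[n,4n]$, which converge to $\hat a_0$ as $n\to\infty$ because $a\approx a_0$ there, sidesteps the delicate subsolution estimate at the cost of invoking more machinery about principal eigenvalues on bounded domains.
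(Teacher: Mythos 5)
The serious gap is in item (4), nonlocal case. Your compactness claim for $P|_Y$ is false: the period map of $u_t=\int_{\RR}e^{-\mu(y-x)}\kappa(y-x)u(t,y)dy-u+a(t,x)u$ is, by variation of constants in the multiplication part, the sum of the multiplication operator $u_0\mapsto e^{\int_0^T(a(s,\cdot)-1)\,ds}u_0(\cdot)$ (the nonlocal semigroup does not smooth, so this non-compact piece survives) and a compact remainder; Arzel\`a--Ascoli does not apply to $P|_Y$, and Krein--Rutman cannot be invoked directly. A positive periodic eigenfunction exists only when the spectral radius strictly exceeds the essential spectral radius coming from that multiplication piece, and this is precisely why the proposition assumes $a\in C^1$ and why the paper proves (4) by quoting Theorem B of \cite{RaSh}, whose entire purpose is to give criteria for existence of principal eigenvalues of time-periodic nonlocal dispersal operators. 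Your argument is fine for $\mathcal{A}u=u_{xx}$, but in the nonlocal case it must be replaced by an essential-spectrum/perturbation argument of the type in \cite{RaSh}.

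Item (3) is also not closed as written, though your route (a direct subsolution far out where $a\approx a_0$) is a legitimate alternative to the paper's, which instead minorizes $a$ by a space--time periodic $\tilde a$ with spatial average at least $a_0-\epsilon$ and then chains the cited comparisons $\lambda(a)\ge\lambda(\tilde a)\ge\lambda(\hat{\tilde a})\ge\hat a_0-\epsilon$ from \cite{RaSh} and \cite{HeShZh}. The inequality you actually need is pointwise, roughly $\mathcal{A}\chi_n(x)\ge-\epsilon\,\chi_n(x)$ on the transition zones, and a uniform bound $\|\mathcal{A}\chi_n\|_\infty=O(1/n)$ cannot be ``absorbed by the $\epsilon$ gap'' at points where $\chi_n(x)\ll 1/n$; this failure already occurs for $\mathcal{A}u=u_{xx}$ with a generic bump. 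It is fixable there by choosing an eigenfunction-shaped (cosine) bump so that $\chi_n''\ge-O(1/n^2)\chi_n$ pointwise, but in the nonlocal case you need, as you anticipate, the principal (Dirichlet-type) eigenfunction on large intervals, whose existence for nonlocal operators is itself nontrivial and of essentially the same weight as the results the paper cites. Items (1) and (2) are correct and essentially coincide with the standard arguments (item (2) with the paper's own proof).
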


\begin{proof}
(1) It can be proved by the similar arguments  as those in  \cite[Proposition 3.3]{RaSh}.

(2) Let $u_0\equiv 1$. Then
$$
\Phi(t,s;a_0)u_0=e^{\int_s^t a_0(\tau)d\tau} u_0\quad \forall\,\, t\ge s.
$$
This implies that
$$
\|\Phi(t,s;a_0)\|=e^{\int_s^ t a_0(\tau)d\tau} \quad \forall\,\, t\ge s.
$$
Hence
$$
\lambda(a_0)=\limsup_{t-s\to\infty}\frac{\ln\|\Phi(t,s;a_0)\|}{t-s}=\hat a_0.
$$

(3) For any $\epsilon>0$, by the arguments in \cite[Lemma 4.1]{KoSh}, there  is a space and time periodic function $\tilde a(t,x)$, $\tilde a(t+T,x)=\tilde a(t,x+L)=\tilde a(t,x)$,
 such that
$$
a(t,x)\ge \tilde a(t,x)\quad \forall \,\, t,x\in\RR
$$
and
$$
\bar {\tilde a}(t):=\frac{1}{L}\int_0^L \tilde a(t,x)dx\ge a_0(t)-\epsilon.
$$
By \cite[Propositions 3.3, 3.10, and Theorem C]{RaSh}, we have
$$
\lambda(a)\ge \lambda(\tilde a)\ge \lambda(\hat {\tilde a}),
$$
where
$$
\hat{\tilde a}(x)=\frac{1}{T}\int_0^T \tilde a(t,x)dt.
$$
By \cite[Theorem 2.1]{HeShZh}, we have
$$
\lambda(a)\ge \hat a_0-\epsilon= \lambda(a_0)-\epsilon.
$$
Letting $\epsilon\to 0$, we have $\lambda(a)\ge \lambda(a_0)$.

(4) It follows from \cite[Theorem B]{RaSh}.
\end{proof}

Consider the following nonhomogeneous linear equation,
\begin{equation}
\label{nonhomogeneous-eq} \frac{\p u}{\p t}=\mathcal{A}(\mu)u(t,x)+a(t)u(t,x)+h(t),\quad x\in\RR,
\end{equation}
where $a(t)$ and $h(t)$ are  $T$-periodic continuous functions. We have

\begin{proposition}
\label{nonhomogeneous-prop}
If $\lambda(\mu,a)<0$, then \eqref{nonhomogeneous-eq}
has a unique $T$-periodic solution $u^{**}(t)$. Moreover, $u^{**}(t)$ is  a globally stable solution
of \eqref{nonhomogeneous-eq}  with respect to perturbations in $X_p$, and if $h(t)\ge 0$ and $h(t)\not \equiv 0$, then
$\inf_{t\in\RR} u^{**}(t)>0$.
\end{proposition}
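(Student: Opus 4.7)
The plan is to exploit the spatial homogeneity of $a(t)$ and $h(t)$: on the invariant subspace of spatially constant functions, \eqref{nonhomogeneous-eq} reduces to a scalar ODE that can be solved explicitly, and the principal-spectrum bound from Proposition \ref{principal-spectrum-prop1}(1) then upgrades uniqueness and decay from this subspace to the full space $X$.

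Since the coefficients depend only on $t$, I would look for a spatially constant $u^{**}(t,x)=u^{**}(t)$. Both forms of $\mathcal{A}(\mu)$ act on constants by scalar multiplication by $\alpha(\mu):=\mu^2$ in the random case and $\alpha(\mu):=\int_\RR e^{-\mu z}\kappa(z)\,dz-1$ in the nonlocal case, so $u^{**}$ must satisfy the scalar linear ODE $u'(t)=(\alpha(\mu)+a(t))u(t)+h(t)$. With $A(t):=\int_0^t(\alpha(\mu)+a(s))\,ds$, variation of parameters furnishes the explicit candidate
\[
u^{**}(t)=\frac{1}{1-e^{A(T)}}\int_t^{t+T}e^{A(t+T)-A(s)}h(s)\,ds,
\]
which is $T$-periodic as long as $e^{A(T)}\ne 1$. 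Since the constant function $1\in X$ satisfies $\Phi(T,0;\mu,a)\cdot 1=e^{A(T)}$, the number $e^{A(T)}$ is a positive eigenvalue of $\Phi(T,0;\mu,a)$ and is therefore bounded by $r(\Phi(T,0;\mu,a))=e^{T\lambda(\mu,a)}<1$ via Proposition \ref{principal-spectrum-prop1}(1); thus $A(T)<0$ and the denominator above is positive. When $h\ge 0$ and $h\not\equiv 0$, the integrand is nonnegative and strictly positive on a set of positive measure, so $u^{**}(t)>0$ for every $t$, and continuity together with $T$-periodicity give $\inf_{t\in\RR}u^{**}(t)>0$.

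For uniqueness and global stability in $X_p$, I would decompose any solution of \eqref{nonhomogeneous-eq} as $u=u^{**}+w$, so that $w$ solves the homogeneous equation $w_t=\mathcal{A}(\mu)w+a(t)w$ and $w(nT,\cdot)=\Phi(T,0;\mu,a)^n w(0,\cdot)$. Because $r(\Phi(T,0;\mu,a))<1$, Gelfand's formula yields $\|\Phi(T,0;\mu,a)^n\|\to 0$ in operator norm, hence $w(nT,\cdot)\to 0$ in $X$; uniform boundedness of $\Phi(t,s;\mu,a)$ on a single period then upgrades this to $w(t,\cdot)\to 0$ as $t\to\infty$, proving global stability. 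Taking $u$ to be any other $T$-periodic solution forces $w$ to be a fixed point of $\Phi(T,0;\mu,a)$, which must vanish since the spectral radius is strictly less than $1$; this gives uniqueness.

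The only mildly delicate step is the inequality $e^{A(T)}\le r(\Phi(T,0;\mu,a))$, which I would handle by recognizing $1\in X=C^b_{\mathrm{unif}}(\RR)$ as a genuine eigenfunction of the propagator on which the operator acts exactly by the scalar $e^{A(T)}$; this scalar then lies in the spectrum and is controlled by the operator-norm spectral radius supplied by Proposition \ref{principal-spectrum-prop1}(1).
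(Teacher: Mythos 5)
Your proof is correct, but it takes a different route from the paper: the paper disposes of Proposition \ref{nonhomogeneous-prop} with a one-line citation to \cite[Proposition 2.5]{KoRaSh}, whereas you give a self-contained argument. Your reduction to the invariant subspace of spatially constant functions is legitimate (both realizations of $\mathcal{A}(\mu)$ act on constants by the scalar $\alpha(\mu)$, namely $\mu^2$ or $\int_{\RR}e^{-\mu z}\kappa(z)\,dz-1$), the variation-of-parameters formula does satisfy the ODE and is $T$-periodic, and the key sign condition $A(T)<0$ follows as you say because $e^{A(T)}$ is an eigenvalue of $\Phi(T,0;\mu,a)$ with eigenfunction $1$, hence dominated by $r(\Phi(T,0;\mu,a))=e^{T\lambda(\mu,a)}<1$; in fact, since $a$ is spatially homogeneous, positivity of $\Phi$ gives $\lVert\Phi(t,s;\mu,a)\rVert=e^{\int_s^t(\alpha(\mu)+a(\tau))d\tau}$, so the eigenvalue bound is actually an equality, $T\lambda(\mu,a)=A(T)$, exactly as in the proof of Proposition \ref{principal-spectrum-prop1}(2). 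Your uniqueness and stability step (subtract $u^{**}$, iterate $\Phi(T,0;\mu,a)$, invoke Gelfand's formula, and use boundedness of the propagator over one period) is also sound, and it yields slightly more than asserted: an explicit formula for $u^{**}$, strict positivity under $h\ge 0$, $h\not\equiv 0$, uniqueness among all $X$-valued $T$-periodic solutions, and global stability with respect to perturbations in all of $X$, which in particular covers the constant perturbations actually used later in the proof of Theorem \ref{main-thm1}(2). What the paper's citation buys instead is brevity and consistency with the framework of \cite{KoRaSh}; what your argument buys is transparency and independence from that reference.
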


\begin{proof}
It follows from \cite[Proposition 2.5]{KoRaSh}
\end{proof}

\subsection{Comparison principle}

In this subsection, we recall some comparison principle for competitive and cooperative systems.

Let $X$, $X^+$, and $X^{++}$ be as in \eqref{space-x-eq1}, \eqref{space-x-eq2},
and \eqref{space-x-eq3}, respectively.
For given $(u_0,v_0)\in X\times X$, let $(u(t,x;t_0,u_0,v_0),v(t,x;t_0,u_0,v_0))$ be the (local) solution of
\eqref{main-eq2} with $(u(t_0,x;t_0,u_0,v_0)$, $v(t_0,x;t_0,u_0,v_0))=(u_0(x),v_0(x))$. We put
$$(u(t,x;u_0,v_0), v(t,x;u_0,v_0))=(u(t,x;0,u_0,v_0),v(t,x;0,u_0,v_0)).
$$
Note that for any $(u_0,v_0)\in X^+\times X^+$ and $t_0\in\RR$, $(u(t,x;t_0,u_0,v_0),v(t,x;t_0,u_0,v_0))$
exists for all $t\ge t_0$ and $(u(t,\cdot;t_0,u_0,v_0),v(t,\cdot;t_0,u_0,v_0))\in X^+\times X^+$ for all $t\ge t_0$.
For biological reason, we are only interested in nonnegative solutions of \eqref{main-eq2}.

We call $(u(t,x),v(t,x))$ is a super-solution (sub-solution) of \eqref{main-eq2} for $t$ in an interval $I$  if
$u(t,\cdot)$, $v(t,\cdot)\in X$ for $t\in I$ and
$$
\begin{cases}
u_t\ge (\le) \mathcal{A}u+u(a_1(t,x)-b_1(t,x)u-c_1(t,x)v),\quad t\in {\rm Int}(I),\,\, x\in\RR\cr
v_t\le (\ge)\mathcal{A} v+v(a_2(t,x)-b_2(t,x)u-c_2(t,x)v),\quad t\in {\rm Int}(I),\,\, x\in\RR.
\end{cases}
$$

\begin{proposition}
\label{comparison-prop}
Assume that $(u_1(t,x),v_1(t,x))$ and $(u_2(t,x),v_2(t,x))$ are sub-solution and super-solution of \eqref{main-eq2} for $t\in [t_1,t_2)$
and $0\le u_1(t_1,x)\le u_2(t_1,x)$, $v_1(t_1,x)\ge v_2(t_1,x)\ge 0$ for $x\in\RR$. Then
$$
0\le u_1(t,x)\le u_2(t,x),\quad v_1(t,x)\ge v_2(t,x)\quad \forall\,\, t\in (t_1,t_2),\quad x\in\RR.
$$
\end{proposition}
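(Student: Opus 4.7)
The plan is to convert this competitive sub/super-solution comparison into a cooperative one by the classical sign-reversal trick. Set $U := u_2 - u_1$ and $V := v_1 - v_2$; by hypothesis $U(t_1, \cdot), V(t_1, \cdot) \ge 0$, and the desired inequalities $u_1 \le u_2$ and $v_1 \ge v_2$ amount to $U, V \ge 0$ on $[t_1, t_2) \times \RR$. I will treat $u_1, v_2 \ge 0$ throughout the working interval as part of the setup, consistent with the standing biological interpretation that only non-negative densities are considered (as noted just before the proposition statement).

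Subtracting the super-solution inequality for $u_2$ from the sub-solution inequality for $u_1$ and using the identities $u_2^2 - u_1^2 = (u_1 + u_2) U$ and $u_2 v_2 - u_1 v_1 = v_2 U - u_1 V$, I would obtain
\begin{equation*}
U_t \ge \mathcal{A} U + p(t,x)\, U + c_1(t,x)\, u_1(t,x)\, V,
\end{equation*}
with $p = a_1 - b_1(u_1 + u_2) - c_1 v_2$ bounded. A symmetric calculation for the $v$-components, using $v_1 u_1 - v_2 u_2 = u_1 V - v_2 U$, gives
\begin{equation*}
V_t \ge \mathcal{A} V + r(t,x)\, V + b_2(t,x)\, v_2(t,x)\, U,
\end{equation*}
with $r = a_2 - b_2 u_1 - c_2(v_1 + v_2)$ bounded. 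The off-diagonal coefficients $c_1 u_1$ and $b_2 v_2$ are non-negative, so $(U, V)$ obeys a linear cooperative system with non-negative initial data.

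The comparison for this cooperative system I would establish by a standard perturbation argument. For $\e > 0$ and large $K$, introduce $U_\e := U + \e e^{K(t - t_1)}$ and $V_\e := V + \e e^{K(t-t_1)}$. Since $\mathcal{A}$ annihilates spatial constants (trivially for $\mathcal{A} = \p_{xx}$, and for the nonlocal operator because $\int_\RR \kappa = 1$), the $\e$-perturbation contributes the positive term $\e e^{K(t-t_1)}\bigl(K - p - c_1 u_1\bigr)$ (and its $V$-analogue) to the right-hand side, which is strictly positive once $K$ exceeds $\sup(|p| + |r| + c_1 u_1 + b_2 v_2)$. A first contact of $U_\e$ or $V_\e$ with zero at some $\tau \in (t_1, t_2)$ would then contradict the strict inequality via the scalar maximum principle, using that $\mathcal{A}$ evaluated at a spatial minimum is non-negative for both dispersal operators; sending $\e \to 0$ yields $U, V \ge 0$. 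The main technical obstacle is that on the unbounded spatial domain $\RR$ the infimum of $U_\e(\tau, \cdot)$ may not be attained, which I would address by a translated minimizing-sequence argument (exploiting the uniform continuity of the bounded solutions), or equivalently by recasting the comparison through the Duhamel representation of the scalar semigroup generated by $\mathcal{A} + p$, which is known to preserve non-negativity.
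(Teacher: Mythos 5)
The paper does not actually prove this proposition: it invokes the classical comparison principle for two-species competitive parabolic systems when $\mathcal{A}u=u_{xx}$ and the arguments of \cite{HeNgSh} in the nonlocal case. Your argument --- the sign-reversal $U=u_2-u_1$, $V=v_1-v_2$ turning the competitive pair into a linear cooperative system with nonnegative coupling coefficients $c_1u_1$ and $b_2v_2$, followed by the $\e e^{K(t-t_1)}$ perturbation, a first-contact/infimum argument using that $\mathcal{A}$ annihilates constants and is nonnegative at (near-)minima, and the minimizing-sequence or Duhamel fix for non-attainment of the infimum on $\RR$ --- is precisely the standard proof underlying those citations, so in substance you are supplying the same route the paper outsources, and your identities for $U_t$ and $V_t$ (with $p=a_1-b_1(u_1+u_2)-c_1v_2$, $r=a_2-b_2u_1-c_2(v_1+v_2)$, both bounded since sub/super-solutions take values in $X$) are correct. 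One point you flag should be made genuinely explicit: cooperativity of the $(U,V)$ system requires $u_1\ge 0$ and $v_2\ge 0$ on all of $[t_1,t_2)$, and this does \emph{not} follow from the stated hypotheses at $t=t_1$ alone --- for instance $(u_1,v_1)=(-(t-t_1),M)$ with $M$ a large constant is a sub-solution with $u_1(t_1,\cdot)=0$ but $u_1<0$ afterwards, so the conclusion $0\le u_1$ as literally written needs nonnegativity built into the class of sub/super-solutions considered. Reading the proposition that way (as the paper's ``only nonnegative solutions'' convention intends, and as it is used later, where these components are actual nonnegative solutions or explicitly nonnegative comparison functions) your proof goes through; it would be worth stating that hypothesis rather than treating it as implicit.
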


\begin{proof}
It follows from comparison principle for two species competitive systems of parabolic equations for the case that $\mathcal{A}u=u_{xx}$ and  follows from the arguments in \cite[Proposition 3.1]{HeNgSh} for the case that $(\mathcal{A}u)(t,x)=\int_{\RR}\kappa(y-x)u(t,y)dy-u(t,x)$.
\end{proof}

Consider
\begin{equation}
\label{cooperative-eq}
\begin{cases}
u_t=\mathcal{A}u+f(t,x,u,v),\quad x\in\RR\cr
v_t=\mathcal{A}v+g(t,x,u,v),\quad x\in\RR,
\end{cases}
\end{equation}
where $F(t,x,u, v)$ and $g(t,x,u,v)$ are locally H\"older continuous in $t$,  uniformly continuous in $x$, and $C^1$ in $u,v$, $f(t,x,0,0)=0$, $g(t,x,0,0)=0$, and
$f_v(t,x,u,v)\ge 0$, $g_u(t,x,u,v)\ge 0$ for $u\ge 0$, $v\ge 0$.
$(u(t,x),v(t,x))\ge (0,0)$ is called a super-solution (sub-solution) of \eqref{cooperative-eq}  on $(\xi^*(t),\infty)$ ($\xi^*(t)\ge -\infty$) for
$t\ge 0$ if $(u(t,x),v(t,x))$ is continuous in $t$ and $x$, and satisfies
$$
\begin{cases}
u_t\ge (\le) \mathcal{A}u+f(t,x,u,v),\quad x>\xi^*(t)\cr
v_t\ge (\le)  \mathcal{A}v+g(t,x,u,v),\quad x>\xi^*(t)
\end{cases}
$$
for $t\ge 0$.

\begin{proposition}
\label{comparison-cooperative-prop}
Suppose that $(u^+(t,x),v^+(t,x))\ge (0,0)$ is  a super-solution  and $(u^-(t,x),v^-(t,x))\ge (0,0)$ is a sub-solution of \eqref{cooperative-eq}  on $(\xi^*(t),\infty)$ ($\xi^*(t)\ge -\infty$) for
$t\ge 0$.

\begin{itemize}
\item[(1)] If $\xi^*(t)=-\infty$ for $t\ge 0$ and $u^+(0,x)\ge u^-(0,x)$, $v^+(0,x)\ge v^-(0,x)$ for $x\in\RR$, then
$$
u^+(t,x)\ge u^-(t,x),\quad v^+(t,x)\ge v^-(t,x)\quad \forall\,\, t>0, \,\, x\in\RR.
$$

\item[(2)] If $\xi^*(\cdot): [0,\infty)\to (-\infty,\infty)$ is $C^1$,  $u^+(t,x)\ge u^-(t,x)$ and $v^+(t,x)\ge v^-(t,x)$ for
$t\ge 0$, $x\le \xi^*(t)$, and $u^+(0,x)\ge u^-(0,x)$ and $v^+(0,x)\ge v^-(0,x)$ for $x\ge \xi^*(0)$, then
$$
u^+(t,x)\ge u^-(t,x),\quad v^+(t,x)\ge v^-(t,x),\quad \forall\,\, t\ge 0,\,\, x\ge \xi^*(t).
$$
\end{itemize}
\end{proposition}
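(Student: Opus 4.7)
The strategy is the classical linearization trick for cooperative systems. Setting $\phi_1 := u^+-u^-$ and $\phi_2 := v^+-v^-$, subtracting the super- and sub-solution inequalities, and applying the mean value theorem termwise gives the cooperative linear system of differential inequalities
\begin{equation*}
(\phi_1)_t \ge \mathcal{A}\phi_1 + \alpha(t,x)\phi_1 + \beta(t,x)\phi_2,\quad (\phi_2)_t \ge \mathcal{A}\phi_2 + \gamma(t,x)\phi_1 + \delta(t,x)\phi_2,
\end{equation*}
where the coefficients $\alpha,\beta,\gamma,\delta$ are bounded on the bounded range of $u^{\pm},v^{\pm}$, and $\beta,\gamma\ge 0$ by the cooperativity hypothesis $f_v,g_u\ge 0$. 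The problem then reduces to showing that nonnegative initial data (and, in Part (2), also nonnegative lateral data on $\{x=\xi^*(t)\}$) propagate to $\phi_i\ge 0$ on the appropriate domain.

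For Part (1), I would substitute $\Phi_i := e^{-\lambda t}\phi_i$ with $\lambda > \max(\|\alpha\|_\infty,\|\delta\|_\infty)$ to make the diagonal coefficients strictly dissipative. When $\mathcal{A}u=u_{xx}$, this reduces to the standard parabolic weak maximum principle for cooperative systems on $\RR$; since $\Phi_i\in C^b_{\rm unif}(\RR)$, a growth barrier of the form $\varepsilon(1+|x|^2)e^{Kt}$ sent to $0$ handles the possibility that the infimum is only approached at spatial infinity. When $\mathcal{A}u=\kappa\ast u-u$, I would instead track the one-sided Dini derivative of $m(t) := \min\bigl(\inf_x\Phi_1(t,x),\inf_x\Phi_2(t,x)\bigr)$ and use $\kappa\ast\Phi_i(t,\cdot)\ge m(t)$ together with the cooperative off-diagonal terms to obtain $D^+ m^-(t)\le C\, m^-(t)$, whence Gronwall forces $m^-\equiv 0$.

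Part (2) runs in parallel on the open moving half-line $\{x>\xi^*(t)\}$, with the data $\phi_i\ge 0$ now furnished on the parabolic boundary $\{t=0,\,x\ge\xi^*(0)\}\cup\{t\ge 0,\,x\le\xi^*(t)\}$. In the local case $\mathcal{A}u=u_{xx}$, the weak maximum principle on a $C^1$ moving parabolic cylinder applies directly: any negative infimum of $\Phi_i$ must be attained on the parabolic boundary (where it is $\ge 0$) or at spatial infinity (ruled out by the barrier). In the nonlocal case, the crucial observation is that although $\kappa\ast\Phi_i$ samples $\Phi_i$ on the opposite side $y\le\xi^*(t)$, the lateral hypothesis forces $\Phi_i\ge 0$ there, so splitting the convolution into the two regions shows the cross-boundary contribution is nonnegative while the remaining piece is bounded below by $m_+(t):=\min\bigl(\inf_{x>\xi^*(t)}\Phi_1,\inf_{x>\xi^*(t)}\Phi_2\bigr)$ times a kernel integral. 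The same Dini/Gronwall argument as in Part (1) then yields $m_+^-\equiv 0$.

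The main obstacle will be the nonlocal case of Part (2), because the operator $\kappa\ast u-u$ is not local and hence does not respect the moving boundary $x=\xi^*(t)$. The plan hinges on exploiting the lateral sign condition $\phi_i\ge 0$ for $x\le\xi^*(t)$ to control the piece of the convolution that crosses $\xi^*(t)$; once this is done, the remainder is a routine cooperative Gronwall estimate. An essentially identical argument for the nonlocal single-species case appears in \cite[Proposition 3.1]{HeNgSh}, which I would adapt nearly verbatim to the two-component cooperative setting here.
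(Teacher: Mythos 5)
Your proposal is correct and follows essentially the route the paper itself intends: the paper gives no details, simply invoking the classical comparison principle for cooperative parabolic systems when $\mathcal{A}u=u_{xx}$ and the arguments of \cite[Proposition 2.1]{ShZh1} for the nonlocal case, and your linearization plus barrier/Dini--Gronwall argument (with the convolution split across the moving boundary controlled by the lateral sign condition) is precisely that standard argument adapted to the two-component cooperative setting. The only small slip is the citation: the nonlocal moving-boundary comparison you want to adapt is \cite[Proposition 2.1]{ShZh1}, whereas \cite[Proposition 3.1]{HeNgSh} is what the paper uses for the competitive comparison in Proposition \ref{comparison-prop}.
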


\begin{proof}
It follows from comparison principle for cooperative systems of parabolic equations for the case that $\mathcal{A}u=u_{xx}$ and  follows from the arguments in \cite[Proposition 2.1]{ShZh1} for the case that $(\mathcal{A}u)(t,x)=\int_{\RR}\kappa(y-x)u(t,y)dy-u(t,x)$.
\end{proof}

\section{Semi-trivial solutions}\label{semitrivial-solution-section}

Consider
\begin{equation}
\label{one-species-eq}
w_t=\mathcal{A}w+w(a(t,x)-b(t,x)w),\quad x\in\RR,
\end{equation}
where $a(t+T,x)=a(t,x)$ and $b(t+T,x)=b(t,x)$;  $a(t,x)-a_0(t)\to 0$ and
$b(t,x)-b_0(t)\to 0$ as $|x|\to\infty$ uniformly in $t$; and $a(t,x)$, $b(t,x)$, $a_0(t)$, and $b_0(t)$ are uniformly continuous in $x\in\RR$ and locally H\"older continuous in $t$. Let $w(t,x;t_0,w_0)$ be the solution of \eqref{one-species-eq} with $w(t_0,\cdot;t_0,w_0)=w_0\in X$.

\begin{proposition}
\label{one-species-prop}
Assume that  $\inf_{t,x\in\RR}b(t,x)>0$. If $\lambda(a_0)>0$, then there is a  unique time periodic
 positive solution $w^*(t,x;a,b)$ of \eqref{one-species-eq} satisfying that $\inf_{t,x\in\RR} u^*(t,x;a,b)>0$, and that  for any  $w_0\in X^{++}$,
$$
\lim_{t\to\infty} |w(t+t_0,x;t_0,w_0)-w^*(t+t_0,x;a,b)|=0
$$
uniformly in $x\in\RR$ and $t_0\in\RR$, and
$$
\lim_{|x|\to\infty} w^*(t,x)=w_0^*(t;a_0,b_0)
$$
uniformly in $t\in\RR$, where $w_0^*(t;a_0,b_0)$ is the unique time periodic positive solution of
$$
w_t=w(a_0(t)-b_0(t)w).
$$
\end{proposition}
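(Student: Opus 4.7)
The plan is to prove Proposition~3.1 by combining super-/sub-solution arguments with monotone iteration under the Poincar\'e period map, and then to handle the spatial asymptotics via a translation-compactness argument.

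\textbf{Existence of $w_0^*$ and of $w^*$.} By Proposition~2.1(2), $\lambda(a_0)=\hat a_0>0$, so together with $\inf_tb_0(t)>0$ the scalar logistic ODE $w'=w(a_0(t)-b_0(t)w)$ admits a unique positive $T$-periodic solution $w_0^*(t;a_0,b_0)$ attracting every positive ODE datum (Bernoulli substitution). For the PDE, the constant $M:=\|a\|_\infty/\inf_{t,x}b(t,x)$ is a super-solution. For a sub-solution, I first note $\lambda(a)\ge \lambda(a_0)>0$ by Proposition~2.1(3); following the approximation in the proof of that item, I choose a smooth spatially $L$-periodic $\tilde a(t,x)\le a(t,x)$ with $\lambda(\tilde a)>0$, and Proposition~2.1(4) provides a positive space-time periodic Floquet--Bloch eigenfunction $\phi$ satisfying $\phi_t=\mathcal A\phi+(\tilde a-\lambda(\tilde a))\phi$. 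For $\delta>0$ small, the comparison $b\delta\phi\le\lambda(\tilde a)+(a-\tilde a)$ holds everywhere, which says exactly that $\delta\phi$ is a sub-solution of \eqref{one-species-eq}.

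\textbf{Uniqueness and global attraction.} Let $P:w_0\mapsto w(T,\cdot\,;0,w_0)$ be the Poincar\'e map; Proposition~2.2 (with $v\equiv 0$) makes $P$ order-preserving on $X^+$. The orbit $\{P^nM\}$ is decreasing and $\{P^n(\delta\phi(0,\cdot))\}$ is increasing, both sandwiched in $[\delta\phi(0,\cdot),M]$; by standard parabolic (random dispersal) or nonlocal regularity they converge to positive fixed points $\underline w(0,\cdot)\le\overline w(0,\cdot)$, hence to positive $T$-periodic solutions $\underline w\le\overline w$ of \eqref{one-species-eq} with positive infima. Since $w\mapsto w(a-bw)$ is strictly sub-homogeneous in $w$, so is $P$ on the positive cone, and a Krein--Rutman-style argument forces $\underline w=\overline w=:w^*$: letting $\alpha^*:=\sup\{\alpha>0:\alpha\overline w\le\underline w\}$, the assumption $\alpha^*<1$ gives, after applying $P$ and using strict sub-homogeneity together with the strong maximum principle, a strict inequality $\alpha^*\overline w<\underline w$ contradicting maximality. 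For global attraction, any $w_0\in X^{++}$ satisfies $\delta'\phi(0,\cdot)\le w_0\le M'$ for some $\delta',M'>0$, and Proposition~2.2 squeezes $w(t,\cdot\,;t_0,w_0)$ between the two monotone iterates, both tending to $w^*$. Uniformity in $t_0\in\RR$ comes from $T$-periodicity of the coefficients together with the fact that the squeezing rate depends only on $\delta',M'$.

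\textbf{Spatial asymptotics.} For $x_n\in\RR$ with $|x_n|\to\infty$, set $w_n(t,x):=w^*(t,x+x_n)$. Each $w_n$ solves \eqref{one-species-eq} with shifted coefficients $a(t,x+x_n),b(t,x+x_n)$, which converge locally uniformly to $a_0(t),b_0(t)$. Parabolic interior estimates (random dispersal), respectively equicontinuity in $t$ from the mild formulation combined with pointwise bounds (nonlocal dispersal), yield a subsequential locally uniform limit $w_\infty$, positive, bounded, $T$-periodic in $t$, solving the spatially homogeneous limit $u_t=\mathcal Au+u(a_0(t)-b_0(t)u)$. I then show $w_\infty\equiv w_0^*(t)$: with $m:=\inf_xw_\infty(0,\cdot)>0$ and $\widetilde M:=\sup_xw_\infty(0,\cdot)<\infty$, Proposition~2.2 sandwiches $w_\infty$ between the two ODE solutions started from the constants $m$ and $\widetilde M$, both of which converge to $w_0^*(t)$; $T$-periodicity in $t$ forces equality. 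Since every subsequential limit equals $w_0^*$, the full limit $\lim_{|x|\to\infty}w^*(t,x)=w_0^*(t)$ follows uniformly in $t$.

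The main obstacle I anticipate is the last step in the nonlocal setting, where $\mathcal Au=\int_\RR\kappa(y-x)u(t,y)\,dy-u$ provides no spatial smoothing: extracting a subsequential limit in a topology strong enough to pass to the limit inside the convolution requires leveraging equicontinuity in $t$ from the mild (variation-of-constants) formulation together with dominated convergence. A secondary subtlety is the Krein--Rutman-style uniqueness, which on $\RR$ must be run through the period map rather than an elliptic operator, but it goes through once order-preservation and the strong maximum principle are available from Proposition~2.2.
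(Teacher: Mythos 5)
The paper disposes of this proposition with a one-line citation to \cite[Theorem 2.1(2)--(4)]{KoSh1}, so your proposal is in effect an attempt to reprove that theorem from scratch; the route you choose (constant super-solution, a sub-solution $\delta\phi$ built from the spatially periodic minorant $\tilde a$ of Proposition \ref{principal-spectrum-prop1}(3)--(4), monotone iteration of the period map, and uniqueness via strict subhomogeneity) is indeed the standard skeleton, and the algebra behind the sub-solution inequality $b\delta\phi\le\lambda(\tilde a)+(a-\tilde a)$ is correct.

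There are, however, genuine gaps at exactly the points where the cited theorem is nontrivial. First, the proposition asserts convergence \emph{uniform in} $x\in\RR$ (and in $t_0$), but your squeezing argument only yields pointwise monotone convergence of $P^nM'$ and $P^n(\delta'\phi(0,\cdot))$ to the fixed point; on the unbounded line Dini's theorem is unavailable, so uniformity in $x$ does not follow and requires a separate argument (e.g., splitting $\RR$ into a compact core, treated by local compactness, and the tails $|x|\gg1$, where \eqref{one-species-eq} is an $\epsilon$-perturbation of the homogeneous equation whose periodic state $w_0^*$ attracts uniformly --- essentially the Liouville-type analysis of \cite{KoSh1}). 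Second, in the nonlocal case there is no smoothing: the monotone limits $\underline w,\overline w$ are a priori only semicontinuous, so they need not lie in $X$ nor be fixed points of the period map on $X$ without further work, and your translation-compactness step for $\lim_{|x|\to\infty}w^*(t,x)=w_0^*(t)$ needs equicontinuity in $x$ of the translates $w^*(\cdot,\cdot+x_n)$, which neither the equation nor the mild formulation supplies; equicontinuity in $t$ plus dominated convergence, your proposed remedy, does not substitute for it. (An alternative that avoids compactness is to trap $w^*$ on $|x|\ge R$ between sub- and super-solutions built from the $\epsilon$-perturbed homogeneous logistic equation.) Finally, in the $\alpha^*$ uniqueness argument the strong maximum principle gives only pointwise strict inequality, which on $\RR$ does not by itself contradict maximality of $\alpha^*$; you need a uniform gain, which must be quantified using the uniform positive bounds on $\underline w,\overline w$, and in the nonlocal setting the strong maximum principle itself is weaker and should be replaced by such a quantitative comparison.
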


\begin{proof}
It follows from \cite[Theorem 2.1(2)-(4)]{KoSh1}.
\end{proof}

\begin{proposition}
\label{semi-trivial-solu-prop}
\begin{itemize}
\item[(1)] There is a unique semitrivial periodic solution $(u^*(t,x),0)$ of \eqref{main-eq2} satisfying that
$\inf_{t,x\in\RR} u^*(t,x)>0$, and that
$$
\lim_{|x|\to\infty} |u^*(t,x)-u_0^*(t)|=0
$$
uniformly in $t\in\RR$, and for any $u_0\in X^{++}$,
$$
\lim_{t\to\infty} |u(t,x;u_0,0)-u^*(t,x)|+|v(t,x;u_0,0)|=0
$$
uniformly in $x\in\RR$.

\item[(2)] There is a unique semitrivial periodic solution $(0,v^*(t,x))$ of \eqref{main-eq2} satisfying that
$\inf_{t,x\in\RR}v^*(t,x)>0$, and that
$$
\lim_{|x|\to\infty} |v^*(t,x)-v_0^*(t)|=0
$$
uniformly in $t\in\RR$, and  for any $v_0\in X^{++}$,
$$
\lim_{t\to\infty} |u(t,x;0,v_0)|+|v(t,x;0,v_0)-v^*(t,x)|=0
$$
uniformly in $x\in\RR$.
\end{itemize}
\end{proposition}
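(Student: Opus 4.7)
The plan is to reduce the proposition to the one-species result, Proposition \ref{one-species-prop}, by restricting to the invariant subspace on which one species vanishes. Since the nonlinearity in the $v$-equation of \eqref{main-eq2} carries $v$ as a factor, $v\equiv 0$ is preserved by the flow, so by uniqueness of (mild) solutions $v(t,x;u_0,0)\equiv 0$ for any $u_0\in X^{++}$. On this invariant axis the system collapses to the scalar logistic equation
\begin{equation*}
u_t = \mathcal{A}u + u\bigl(a_1(t,x) - b_1(t,x)u\bigr),\quad x\in\RR,
\end{equation*}
which is precisely \eqref{one-species-eq} with $a = a_1$, $b = b_1$, $a_0 = a_1^0$, $b_0 = b_1^0$.

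To apply Proposition \ref{one-species-prop} I need to check its two hypotheses. The positivity $\inf_{t,x\in\RR} b_1(t,x)\ge b_{1L}^0 > 0$ is immediate from (H0). The sign condition $\lambda(a_1^0) > 0$ follows from Proposition \ref{principal-spectrum-prop1}(2), which gives $\lambda(a_1^0) = \hat a_1^0 := \tfrac{1}{T}\int_0^T a_1^0(t)\,dt \ge a_{1L}^0 > 0$, again by (H0). Proposition \ref{one-species-prop} therefore delivers a unique positive $T$-periodic solution, which we name $u^*(t,x) := u^*(t,x;a_1,b_1)$, satisfying $\inf_{t,x\in\RR}u^*(t,x) > 0$, the asymptotic matching $\lim_{|x|\to\infty}|u^*(t,x) - u_0^*(t)| = 0$ uniformly in $t$ (here $u_0^*(t)$ is the unique positive $T$-periodic solution of $w_t = w(a_1^0(t) - b_1^0(t)w)$, which agrees with the $u_0^*$ already fixed in the paper), and the global attraction $\lim_{t\to\infty}|u(t,x;u_0,0) - u^*(t,x)| = 0$ uniformly in $x$ for every $u_0 \in X^{++}$. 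Combining this with $v(\cdot,\cdot;u_0,0)\equiv 0$ yields the convergence statement in part (1). Uniqueness of $(u^*,0)$ among semitrivial $T$-periodic solutions with positive $u$-component is inherited from the uniqueness clause in Proposition \ref{one-species-prop}.

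Part (2) is obtained verbatim by symmetry: set $u\equiv 0$, reduce \eqref{main-eq2} to $v_t = \mathcal{A}v + v(a_2(t,x) - c_2(t,x)v)$, and apply Proposition \ref{one-species-prop} with $a = a_2$, $b = c_2$, $a_0 = a_2^0$, $b_0 = c_2^0$. The requisite hypotheses are once more supplied by (H0), since $c_2(t,x)\ge c_{2L}^0 > 0$ and $\lambda(a_2^0) = \hat a_2^0 \ge a_{2L}^0 > 0$.

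No genuine obstacle arises: the entire argument is a direct reduction to Proposition \ref{one-species-prop}. The only point requiring a brief justification is the invariance of the axes $\{v=0\}$ and $\{u=0\}$ under the flow of \eqref{main-eq2}, and this is immediate from the multiplicative form of the competitive nonlinearities together with uniqueness of solutions in $X\times X$.
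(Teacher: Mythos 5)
Your argument is essentially the paper's own proof: the paper disposes of this proposition by citing Proposition \ref{one-species-prop}, i.e.\ exactly the reduction to the scalar logistic equation on the invariant axes $\{v=0\}$ and $\{u=0\}$ that you carry out, with the hypotheses verified via Proposition \ref{principal-spectrum-prop1}(2) and (H0). One tiny imprecision: (H0) controls only the limiting coefficients, so $\inf_{t,x\in\RR}b_1(t,x)\ge b_{1L}^0$ need not hold on the perturbed region; the needed positivity $\inf_{t,x\in\RR}b_1(t,x)>0$ instead follows from $b_1>0$, continuity, $T$-periodicity in $t$, and $b_1(t,x)=b_1^0(t)$ for $|x|\gg 1$ (and likewise for $c_2$).
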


\begin{proof}
It follows from Proposition \ref{one-species-prop}.
\end{proof}

We call $(u^*(t,x),0)$ and $(0,v^*(t,x))$  {\it semitrivial solutions} of
\eqref{main-eq2}. Consider the linearization of \eqref{main-eq2} at $(u^*,0)$ and $(0,v^*)$,
\begin{equation}
\label{linear-eq1}
\begin{cases}
u_t=\mathcal{A} u+(a_1(t,x)-2b_1u^*(t,x))u-c_1(t,x)u^*(t,x)v,\quad x\in\RR\cr
v_t=\mathcal{A} v+(a_2(t,x)-b_2(t,x)u^*(t,x))v,\quad x\in\RR,
\end{cases}
\end{equation}
and
\begin{equation}
\label{linear-eq2}
\begin{cases}
u_t=\mathcal{A}+(a_1(t,x)-c_1(t,x)v^*(t,x))u,\quad x\in\RR\cr
v_t=\mathcal{A} v-b_2(t,x)v^*(t,x)u+(a_2(t,x)-2 c_2(t,x)v^*(t,x))v,\quad x\in\RR.
\end{cases}
\end{equation}
Let $\Psi_1(t,s;u^*,0)$ be the solution operator of \eqref{linear-eq1} and $\Psi_2(t,s;0,v^*)$ be the solution operator of
\eqref{linear-eq2} on $X$.
We call $(u^*,0)$ {\it linearly unstable} if  $r(\Psi_1(T,0;u^*,0))>1$ and call $(0,v^*)$ {\it linearly unstable} if $r(\Psi_2(T,0;0,v^*))>1$.

\begin{theorem}
\label{stability-semitrivial-solu-thm}
\begin{itemize}
\item[(1)] If $r(\Phi(T,0;a_2-b_2u^*))>1$, then $(u^*,0)$ is linearly unstable. If $r(\Phi(T,0;a_1-c_1v^*))>1$,
then $(0,v^*)$ is linearly unstable.

\item[(2)] If $r(\Phi(T,0;a_2^0-b_2^0 u_0^*)>1$, then
$r(\Psi_1(T,0;u^*,0))>1$. Hence if $(u^*_0,0)$ is  a linearly unstable solution of \eqref{main-eq1}, then
$(u^*,0)$ is a linearly  unstable solution of \eqref{main-eq2}.

\item[(3)] If $r(\Phi(T,0;a_1^0-c_1^0 v_0^*)>1$, then
$r(\Psi_2(T,0;0,v^*))>1$. Hence if $(0,v^*_0)$ is  a linearly  unstable solution of \eqref{main-eq1}, then
$(0,v^*)$ is a linearly  unstable solution of \eqref{main-eq2}.
\end{itemize}
\end{theorem}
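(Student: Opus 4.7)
The strategy is to exploit the block-triangular structure of the linearized systems \eqref{linear-eq1} and \eqref{linear-eq2}, and then reduce the comparison of spectral radii to the scalar principal spectrum estimate of Proposition \ref{principal-spectrum-prop1}(3).

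\emph{Step 1 (Part (1)).} In system \eqref{linear-eq1} the $v$-equation $v_t = \mathcal{A}v + (a_2 - b_2 u^*)v$ does not involve $u$, so on $X \times X$ the solution operator has the upper-triangular form
\begin{equation*}
\Psi_1(T,0;u^*,0) = \begin{pmatrix} \Phi(T,0;\, a_1 - 2 b_1 u^*) & K \\ 0 & \Phi(T,0;\, a_2 - b_2 u^*) \end{pmatrix},
\end{equation*}
where $K \in \mathcal{L}(X)$ is the Duhamel operator driven by the coupling term $-c_1 u^* v$. Iterating this triangular structure gives $\|\Psi_1(T,0;u^*,0)^n\| \ge \|\Phi(T,0;\, a_2 - b_2 u^*)^n\|$ for every $n$, and hence
\begin{equation*}
r(\Psi_1(T,0;u^*,0)) \ge r(\Phi(T,0;\, a_2 - b_2 u^*)) > 1
\end{equation*}
under the hypothesis. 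The instability statement for $(0,v^*)$ is obtained by applying the identical argument to \eqref{linear-eq2}, whose $u$-equation $u_t = \mathcal{A}u + (a_1 - c_1 v^*)u$ is decoupled from $v$.

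\emph{Step 2 (Part (2)).} By Proposition \ref{semi-trivial-solu-prop}(1), $u^*(t,x) \to u_0^*(t)$ uniformly in $t \in \RR$ as $|x| \to \infty$; combined with the hypotheses $a_2(t,x) \to a_2^0(t)$ and $b_2(t,x) \to b_2^0(t)$ this yields
\begin{equation*}
\lim_{|x| \to \infty} \bigl| [a_2(t,x) - b_2(t,x) u^*(t,x)] - [a_2^0(t) - b_2^0(t) u_0^*(t)] \bigr| = 0
\end{equation*}
uniformly in $t \in \RR$. Proposition \ref{principal-spectrum-prop1}(3) therefore gives
\begin{equation*}
\lambda(a_2 - b_2 u^*) \ge \lambda(a_2^0 - b_2^0 u_0^*).
\end{equation*}
In view of Proposition \ref{principal-spectrum-prop1}(1), the hypothesis $r(\Phi(T,0;\, a_2^0 - b_2^0 u_0^*)) > 1$ is equivalent to $\lambda(a_2^0 - b_2^0 u_0^*) > 0$, so $\lambda(a_2 - b_2 u^*) > 0$, i.e., $r(\Phi(T,0;\, a_2 - b_2 u^*)) > 1$. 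Applying Part (1) completes Part (2). Part (3) is obtained by interchanging the roles of $u$ and $v$ and using Proposition \ref{semi-trivial-solu-prop}(2) in place of (1).

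\emph{Main obstacle.} The only point that is not immediate is the spectral-radius inequality $r(\Psi_1(T,0;u^*,0)) \ge r(\Phi(T,0;\, a_2 - b_2 u^*))$ on the infinite-dimensional space $X \times X$; but as noted above, it reduces to an elementary norm iteration for upper-triangular operators, so no serious analytic difficulty arises. The rest of the proof is a bookkeeping application of the principal-spectrum monotonicity (Proposition \ref{principal-spectrum-prop1}(3)) once the uniform-in-$t$ limit of the coefficient $a_2 - b_2 u^*$ at $|x| = \infty$ is verified.
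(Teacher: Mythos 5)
Your proposal is correct and follows essentially the same route as the paper: part (1) exploits the decoupled $v$-equation (resp.\ $u$-equation) so that the second component of $\Psi_1(T,0;u^*,0)$ is exactly $\Phi(T,0;a_2-b_2u^*)$, giving $r(\Psi_1(T,0;u^*,0))\ge r(\Phi(T,0;a_2-b_2u^*))$, and parts (2)--(3) combine the uniform limits $u^*(t,x)\to u_0^*(t)$, $v^*(t,x)\to v_0^*(t)$ from Proposition \ref{semi-trivial-solu-prop} with Proposition \ref{principal-spectrum-prop1}(1),(3). Your added Gelfand-formula iteration for the triangular operator is just a slightly more explicit justification of the spectral-radius inequality the paper asserts directly.
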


\begin{proof}
(1) For given $(u_0,v_0)\in X\times X$, let
$(\Psi_{11}(T,0;u^*,0)(u_0,v_0),\Psi_{12}(T,0;u^*,0)(u_0,v_0))=\Psi_1(T,0;u^*,0)(u_0,v_0)$. Then
$\Psi_{12}(T,0;u^*,0)(u_0,v_0)=\Phi(T,0;a_2-b_2u^*))v_0$. Hence
$$
r(\Psi_1(T,0;u^*,0))\ge r(\Phi(T,0;a_2-b_2u^*)).
$$
This implies that if $r(\Phi(T,0;a_2-b_2u^*))>1$, then $(u^*,0)$ is linearly unstable.

Similarly, we can prove that, if $r(\Phi(T,0;a_1-c_1v^*))>1$,
then $(0,v^*)$ is linearly unstable.

(2) By Proposition \ref{semi-trivial-solu-prop}(1),
$$
\lim_{|x|\to\infty} |\big(a_2(t,x)-b_2(t,x)u^*(t,x)\big)-\big(a_2^0(t)-b_2^0(t)u_0^*(t)\big)|=0
$$
uniformly in $t\in\RR$. Then by Proposition \ref{principal-spectrum-prop1} and (1),
$$
r(\Psi_1(T,0;u^*,0))\ge r(\Phi(T,0;a_2-b_2u^*))\ge r(\Phi(T,0;a_2^0-b_2^0 u_0^*)).
$$
This implies (2).

(3) It can be proved by the similar arguments as in (2).
\end{proof}

\begin{remark}
\label{stable-rk}
If $(u_0^*,0)$ is a stable solution of \eqref{main-eq1}, $(u^*,0)$ may not be a stable solution of \eqref{main-eq2}.
Similarly, if $(0,v_0^*)$ is a stable solution of \eqref{main-eq1}, $(0,v^*)$ may not be a stable solution of \eqref{main-eq2}.
In fact, by the arguments in Theorem \ref{stability-semitrivial-solu-thm}(1), (2),
$$
r(\Psi_1(T,0;u^*,0))\ge r(\Phi(T,0;a_2-b_2u^*))\ge  r(\Phi(T,0;a_2^0-b_2^0 u_0^*).
$$
Assume that $\lambda(a_2^0-b_2^0u_0^*)<0$. Let  $a_1(t,x)=a_1^0(t)$, $b_1(t,x)=b_1^0(t)$, $c_1(t,x)=c_1^0(t)$, $b_2(t,x)=b_2^0(t)$, $c_2(t,x)=c_2^0(t)$, and
$$
a_2(t,x)=a_2^0(t)+a^*(x),
$$
where $a^*\in X^+$ with compact support and $\lambda(a^*)>-\frac{1}{T}\int_0^T \big(a_2^0(t)-b_2^0(t)u_0^*(t)\big)dt$. Then
$$
a_2(t,x)-b_2(t,x)u^*(t,x)=a^*(x)+a_2^0(t)-b_2^0(t)u_0^*(t)
$$
and
$$
\lambda(a_2-b_2 u^*)=\lambda (a^*)+\frac{1}{T}\int_0^T \big(a_2^0(t)-b_2^0(t)u_0^*(t)\big)dt>0.
$$
So in this case, $(u_0^*,0)$ is linearly stable solution of \eqref{main-eq1} and $(u^*,0)$ is linearly unstable
solution of \eqref{main-eq2}.

Similarly, if $(0,v_0^*)$ is a stable solution of \eqref{main-eq1}, $(0,v^*)$ may not  be  a stable solution of \eqref{main-eq2}.
\end{remark}

\section{Persistence and Coexistence}\label{persistence-section}

In this section, we study the persistence and coexistence dynamics of \eqref{main-eq2}.

We say that {\it persistence occurs} in \eqref{main-eq2} if there is  $\eta>0$ such that for any
 $(u_0,v_0)\in X^{++}\times X^{++}$, there is $T(u_0,v_0)>0$ such that
$$
 u(t+t_0,x;t_0,u_0,v_0)\ge \eta,\,\,\, v(t+t_0,x;t_0,u_0,v_0)\ge \eta\quad \forall \,\, t\ge T(u_0,v_0),\,\, x\in\RR,\,\, t_0\in\RR.
 $$
A time $T$-periodic  solution $(u^{**}(t,x),v^{**}(t,x))$ of \eqref{main-eq2} is called a {\it coexistence state} if
$$\inf_{t\in\RR,x\in\RR}u^{**}(t,x)>0\quad {\rm and}\quad \inf_{t\in\RR,x\in\RR}v^{**}(t,x)>0.
$$

\begin{theorem}
\label{peristence-thm}
 If both $(u_0^*,0)$ and $(0,v_0^*)$ are linearly unstable solutions of \eqref{main-eq1}, then
persistence occurs in \eqref{main-eq2} and \eqref{main-eq2} has a coexistence state $(u^{**}(t,x),v^{**}(t,x))$.
If, in addition, $\frac{\inf_{t\in\RR}b_{1}(t,x)}{\sup_{t\in\RR}b_{2}(t,x)}>\frac{\sup_{t\in\RR}c_{1}(t,x)}{\inf_{t\in\RR}c_{2}(t,x)}$ for each $x\in\RR$, then \eqref{main-eq2} has a spatially continuous coexistence state $(u^{**}(t,x),v^{**}(t,x))$.
\end{theorem}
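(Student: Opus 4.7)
The plan is first to translate the instability hypothesis into positivity of principal spectrum points for the $x$-dependent linearized operators. By the definitions in the paper, $(u_0^*,0)$ and $(0,v_0^*)$ are linearly unstable for \eqref{main-eq1} iff $\lambda(a_2^0-b_2^0u_0^*)>0$ and $\lambda(a_1^0-c_1^0v_0^*)>0$. Since Proposition \ref{semi-trivial-solu-prop} gives $u^*(t,x)\to u_0^*(t)$ and $v^*(t,x)\to v_0^*(t)$ uniformly as $|x|\to\infty$, Proposition \ref{principal-spectrum-prop1}(3) upgrades these to $\lambda(a_2-b_2u^*)>0$ and $\lambda(a_1-c_1v^*)>0$ for the full $x$-dependent operators. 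This is the key input the rest of the proof will exploit.

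Next I would establish a priori upper and lower bounds. For any $(u_0,v_0)\in X^{++}\times X^{++}$, the inequalities $u_t\le\mathcal{A}u+u(a_1-b_1u)$ and $v_t\le\mathcal{A}v+v(a_2-c_2v)$ combined with the comparison principle (Proposition \ref{comparison-prop}) and Proposition \ref{one-species-prop} give, for any $\epsilon>0$, a time $T_1$ after which $u(t,x)\le u^*(t,x)+\epsilon$ and $v(t,x)\le v^*(t,x)+\epsilon$ uniformly in $x$. Choosing $\epsilon>0$ small enough that $\lambda(a_2^0-b_2^0(u_0^*+\epsilon))>0$, Proposition \ref{one-species-prop} applied to the auxiliary equation $\tilde v_t=\mathcal{A}\tilde v+\tilde v(a_2-b_2(u^*+\epsilon)-c_2\tilde v)$ produces a unique positive $T$-periodic solution $\tilde v^*_\epsilon$ with $\eta_v:=\inf\tilde v^*_\epsilon>0$ attracting every $X^{++}$-datum. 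Since $v(T_1,\cdot)\in X^{++}$ (from $v_0\in X^{++}$ and the crude lower estimate $v_t\ge-Cv$) and $v$ super-solves the $\tilde v$-equation on $[T_1,\infty)$, comparison gives $v(t,x)\ge\eta_v/2$ eventually; the symmetric argument with $\tilde u^*_\epsilon$ yields $u\ge\eta_u/2$, proving uniform persistence with $\eta=\tfrac12\min(\eta_u,\eta_v)$.

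For the coexistence state, I would use monotone iteration of the Poincar\'e map $P(u_0,v_0)=(u(T,\cdot;u_0,v_0),v(T,\cdot;u_0,v_0))$, which by Proposition \ref{comparison-prop} is monotone in the competitive order $(u_1,v_1)\leq_c(u_2,v_2)\iff u_1\le u_2,\,v_1\ge v_2$. The order interval $I=\{(u,v):\tilde u^*_\epsilon\le u\le u^*,\,\tilde v^*_\epsilon\le v\le v^*\}$ is $P$-invariant: each component is sandwiched between its own logistic upper and lower solutions, and a direct check (using $u\le u^*\le u^*+\epsilon$ to compare $v$ with $\tilde v^*_\epsilon$, and $v\ge 0$ to compare $u$ with $u^*$) shows $P(u^*,\tilde v^*_\epsilon)\leq_c(u^*,\tilde v^*_\epsilon)$. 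Iterating, $\{P^n(u^*,\tilde v^*_\epsilon)\}$ is $\leq_c$-decreasing with lower bound $(\tilde u^*_\epsilon,v^*)$ and so converges pointwise to some $(u^{**},v^{**})\in I$; passing to the limit in the defining integral/PDE (dominated convergence for the nonlocal kernel, parabolic compactness for the random-dispersal case) shows this limit is a $P$-fixed point, hence a $T$-periodic solution of \eqref{main-eq2}, and the Step~2 bounds force $\inf u^{**}\ge\eta_u/2>0$ and $\inf v^{**}\ge\eta_v/2>0$.

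The main obstacle will be the spatial continuity of $(u^{**},v^{**})$ under the extra condition $b_{1L}(x)c_{2L}(x)>b_{2M}(x)c_{1M}(x)$. In the random-dispersal case parabolic regularity makes every iterate $C^2$ in $x$ and hands continuity to the limit for free, but in the nonlocal case there is no smoothing and a monotone pointwise limit of continuous functions need not be continuous. I expect the extra pointwise strict inequality between the competition coefficients to act as a Lipschitz/contraction estimate that upgrades pointwise convergence of $P^n(u^*,\tilde v^*_\epsilon)$ to uniform convergence on $\RR$, so that $(u^{**},v^{**})$ inherits continuity from its iterates and lies in $C_{\rm unif}^b(\RR)^2$. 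Turning the separation $b_{1L}(x)c_{2L}(x)-b_{2M}(x)c_{1M}(x)>0$ into a genuine quantitative estimate on the iteration operator is the delicate technical step that the proof must carry out.
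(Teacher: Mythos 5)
Your persistence argument is essentially the paper's: translate linear instability of $(u_0^*,0)$ and $(0,v_0^*)$ into $\lambda(a_2^0-b_2^0u_0^*)>0$, $\lambda(a_1^0-c_1^0v_0^*)>0$, push each species under its semitrivial profile plus $\epsilon$ by scalar comparison, and then bound it from below by the positive $T$-periodic state of an auxiliary logistic equation whose limiting homogeneous growth rate still has positive principal spectrum point (Proposition \ref{one-species-prop}); this is exactly the mechanism used in the paper, up to which species is estimated first. For the coexistence state your route differs mildly and is legitimate: you iterate the Poincar\'e map on the competitively ordered interval with corners $(\tilde u^*_\epsilon, v^*)$ and $(u^*,\tilde v^*_\epsilon)$, the corner pairs being generalized super/sub-solutions because $b_2\epsilon\tilde v^*_\epsilon\ge 0$ and $c_1\epsilon\tilde u^*_\epsilon\ge 0$, whereas the paper builds its sub/super-solutions as $(u^*,\epsilon e^{\lambda(a^*)t}\phi^*)$ and $(\epsilon e^{\lambda(b^*)t}\psi^*,v^*)$ from principal eigenfunctions of space--time periodic minorants (via the arguments of \cite[Lemma 4.1]{KoSh} and \cite[Theorem B]{RaSh}). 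Your construction is somewhat more self-contained (no periodic approximation or eigenfunction machinery) and it gives the positive lower bounds $\inf u^{**}\ge\inf\tilde u^*_\epsilon>0$, $\inf v^{**}\ge\inf\tilde v^*_\epsilon>0$ directly from the order interval; both versions then face the same limit-passage issue for the monotone iterates.

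The genuine gap is the last assertion of the theorem. In the random dispersal case parabolic regularity indeed settles continuity, as you say, but in the nonlocal case you only conjecture that the condition $\inf_t b_1(t,x)\,\inf_t c_2(t,x)>\sup_t b_2(t,x)\,\sup_t c_1(t,x)$ can be turned into a contraction-type estimate yielding uniform convergence of the iterates, and you explicitly leave that ``delicate technical step'' unproved; without it you have not established that the monotone pointwise limit is even a (semi)continuous coexistence state, let alone a continuous one. The paper closes exactly this step by invoking the arguments of \cite[Theorem A]{NgRa}: the two monotone iterations (from the upper and lower corner pairs) produce limits with complementary semicontinuity properties, and the stated coefficient condition is the one under which those arguments identify the limits and yield a continuous coexistence state. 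So your proposal is sound for persistence and for the existence of a (possibly only semicontinuous, in the nonlocal case) coexistence state, but the spatial-continuity claim under the extra hypothesis requires either carrying out the quantitative estimate you postpone or appealing, as the paper does, to the Volterra--Lotka nonlocal theory of \cite{NgRa}.
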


\begin{proof}
Assume that $(u_0^*,0)$ and $(0,v_0^*)$ are linearly unstable solutions of \eqref{main-eq1}. We first prove that the persistence occurs in \eqref{main-eq2}.

By the linear instability of $(0,v_0^*)$,
$\lambda(a_1^0-c_1^0v^*_0)>0$. Hence, by Proposition \ref{principal-spectrum-prop1}, there is $\epsilon_0>0$ such that for any $0<\epsilon\le \epsilon_0$,
\begin{equation}
\label{persistence-eq1}
 \lambda(a_1-c_1(v^*+\epsilon))\ge \lambda(a_1^0-c_1^0(v^*_0+\epsilon)) >0.
\end{equation}
For any
 $(u_0,v_0)\in X^{++}\times X^{++}$, by Propositions \ref{comparison-prop}  and  \ref{semi-trivial-solu-prop} and Theorem
 \ref{stability-semitrivial-solu-thm}, there is $n_0\in\NN$ such that
 $$ 0< v(t_0+t,x;t_0,u_0,v_0)\le v(t_0+t,x;t_0,0,v_0)\le v^*(t_0+t,x)+\epsilon_0/2
 $$
 for $t\ge n_0T$,  $x\in\RR$, and $t_0\in\RR$.
 This implies that
 $$
 u_t\ge \mathcal{A}u+u(a_1(t,x)-b_1(t,x)u-c_1(t,x)(v^*(t,x)+\epsilon_0/2))
$$
for $t\ge t_0+n_0T$. Let $0<\epsilon_1<\epsilon_0$ be such that
$$
\inf_{t\in\RR,x\in\RR}\big(w^*(t,x;a_1-c_1(v^*+\epsilon_0),b_1)-\epsilon_1\big)>0,
$$
where $w^*(t,x;a_1-c_1(v^*+\epsilon_0),b_1)$ is the unique $T$-periodic positive solution of \eqref{one-species-eq} with $a$ being replaced
by $a_1-c_1(v^*+\epsilon_0)$ and $b$ being replaced by $b_1$.
Then by Proposition \ref{one-species-prop} and \eqref{persistence-eq1}, there is $n_1\ge n_0$ such that
$$
w^*(t_0+t,x;a_1-c_1(v^*+\epsilon_0),b_1)-\epsilon_1\le u(t_0+t,x;t_0,u_0,v_0)\le u^*(t_0+t,x)+\epsilon_0/2
$$
for $t\ge n_1T$, $x\in\RR$, and $t_0\in\RR$.

Let $\eta_1= \inf_{t\in\RR,x\in\RR} \big(w^*(t,x;a_1-c_1(v^*+\epsilon_0),b_1)-\epsilon_1\big)$.
We then have that for any  $(u_0,v_0)\in X^{++}\times X^{++}$, there is $T_1(u_0,v_0)>0$ such that
$$
u(t_0+t,x;t_0,u_0,v_0)\ge \eta_1
$$
for all $t\ge T_1(u_0,v_0)$, $x\in\RR$, and $t_0\in\RR$.

Similarly, by the linear instability of $(u_0^*,0)$, we can prove that there is $\eta_2>0$ such that
for any  $(u_0,v_0)\in X^{++}\times X^{++}$, there is $T_2(u_0,v_0)>0$ such that
$$
v(t_0+t,x;t_0,u_0,v_0)\ge \eta_2
$$
for all $t\ge T_2(u_0,v_0)$, $x\in\RR$, and $t_0\in\RR$. Let $\eta=\min\{\eta_1,\eta_2\}$. Then for any $(u_0,v_0)\in X^{++}\times X^{++}$, there is $T(u_0,v_0)>0$ such that
$$
u(t_0+t,x;t_0,u_0,v_0)\ge \eta,\quad v(t_0+t,x;t_0,u_0,v_0)\ge \eta
$$
for all $t\ge T(u_0,v_0)$, $x\in\RR$, and $t_0\in\RR$. Hence uniform persistence occurs in  \eqref{main-eq2}.

Next, we prove the existence of a coexistence state.
By the arguments in \cite[Lemma 4.1]{KoSh}, there are $\delta>0$, $L>0$, and $C^1$ functions
$a^*(t,x)$ and $b^*(t,x)$ satisfying that
$$
a^*(t+T,x)=a^*(t,x+L)=a^*(t,x),\quad b^*(t+T,x)=b^*(t,x+L)=b^*(t,x),
$$
$$
a_2(t,x)-b_2(t,x)u^*(t,x)\ge a^*(t,x)+\delta,\quad a_1(t,x)-c_1(t,x)v^*(t,x)\ge b^*(t,x)+\delta,
$$
and
$$
\lambda(a^*)>0,\quad  \lambda(b^*)>0.
$$
Moreover, there are positive functions $\phi^*(t,x)$ and $\psi^*(t,x)$ satisfying that
$$
\Phi(T,0;a^*)\phi^*(0,\cdot)=e^{\lambda(a^*)T}\phi^*(0,\cdot),\quad \Phi(T,0;b^*)\psi^*(0,\cdot)=e^{\lambda(b^*)T}\psi^*(0,\cdot).
$$
It is then not difficult to prove that there is $\tilde \epsilon_0>0$ such that for any $0<\epsilon\le \tilde \epsilon_0$, $(u_\epsilon^+(t,x),v_\epsilon^+(t,x))=(u^*(t,x),\epsilon e^{\lambda(a^*)t}\phi^*(t,x))$
is super-solution of \eqref{main-eq2} for $0\le t\le T$ and
$(u_\epsilon^-(t,x),v_\epsilon^-(t,x))=(\epsilon e^{\lambda(b^*)t} \psi^*(t,x),v^*(t,x))$ is sub-solution of \eqref{main-eq2} for $0\le t\le T$.

Fix $0<\epsilon\le \tilde \epsilon_0$ such that
$$
u_\epsilon^-(0,x)<u_\epsilon^+(0,x),\quad v_\epsilon^-(0,x)>v_\epsilon^+(0,x)\quad \forall\,\, x\in\RR.
$$
By Proposition \ref{comparison-prop},
$$
u((n+1)T+t,x;0,u_\epsilon^+(0,\cdot),v_\epsilon^+(0,\cdot))\le u(nT+t,x;0,u_\epsilon^+(0,\cdot),v_\epsilon^+(0,\cdot)),
$$
$$
v((n+1)T+t,x;0,u_\epsilon^+(0,\cdot),v_\epsilon^+(0,\cdot))\ge v(nT+t,x;0,u_\epsilon^+(0,\cdot),v_\epsilon^+(0,\cdot)),
$$
$$
u((n+1)T+t,x;0,u_\epsilon^-(0,\cdot),v_\epsilon^-(0,\cdot))\ge u(nT+t,x;0,u_\epsilon^-(0,\cdot),v_\epsilon^-(0,\cdot)),
$$
and
$$
v((n+1)T+t,x;0,u_\epsilon^-(0,\cdot),v_\epsilon^-(0,\cdot))\le v(nT+t,x;0,u_\epsilon^-(0,\cdot),v_\epsilon^-(0,\cdot))
$$
for $n=0,1,2,\cdots$, $t\ge 0$,  and $x\in\RR$. Let
$$
(u^+(t,x),v^+(t,x))=\lim_{n\to\infty} (u(nT+t,x;0,u_\epsilon^+,v_\epsilon^+), v(nT+t,x;0,u_\epsilon^+,v_\epsilon^+))
$$
and
$$
(u^-(t,x),v^-(t,x))=\lim_{n\to\infty} (u(nT+t,x;0,u_\epsilon^-,v_\epsilon^-), v(nT+t,x;0,u_\epsilon^-,v_\epsilon^-))
$$
for $t\ge 0$ and $x\in\RR$. Then
$$
(u^\pm(t+T,x),v^\pm(t+T,x))=(u^\pm(t,x),v^\pm(t,x))\quad \forall\,\, t\ge 0,\quad x\in\RR.
$$

In the case that $\mathcal{A}u=u_{xx}$, by the regularity and a priori estimates for parabolic equations,
both $(u^+(t,x),v^+(t,x))$ and $(u^-(t,x),v^-(t,x))$ are continuous coexistence states of \eqref{main-eq2}.
In the case that $(\mathcal{A}u)(t,x)=\int_{\RR}k(y-x)u(t,y)dy-u(t,x)$, by the arguments of \cite[Theorem A]{NgRa},
both $(u^+(t,x),v^+(t,x))$ and $(u^-(t,x),v^-(t,x))$ are semi-continuous coexistence states of \eqref{main-eq2}, and moreover, if
$\frac{\inf_{t\in\RR}b_{1}(t,x)}{\sup_{t\in\RR}b_{2}(t,x)}>\frac{\sup_{t\in\RR}c_{1}(t,x)}{\inf_{t\in\RR}c_{2}(t,x)}$ for each $x\in\RR$, then
they are continuous coexistence states of \eqref{main-eq2}.
This completes the proof of the theorem.
\end{proof}

\begin{corollary}
\label{persistence-cor}
 If $(0,v_0^*)$ is a  linearly unstable solution of \eqref{main-eq1}, then there is $\eta>0$ such that for any
 $(u_0,v_0)\in X^{++}\times X^{++}$, there is $T(u_0,v_0)>0$ such that
$$
 u(t+t_0,x;t_0,u_0,v_0)\ge \eta,\,\,\, v(t+t_0,x;t_0,u_0,v_0)\le v^*(t+t_0,x)- \eta\quad \forall \,\, t\ge T(u_0,v_0),\,\, x\in\RR,\,\, t_0\in\RR.
 $$
\end{corollary}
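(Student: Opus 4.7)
The corollary asserts two uniform bounds: $u\ge\eta$ and $v\le v^*-\eta$ eventually, with $\eta>0$ independent of the initial data. The plan is to extract the $u$-bound directly from the proof of Theorem~\ref{peristence-thm} and then derive the $v$-bound by reducing the $v$-equation to a scalar logistic equation for which $v^*$ is a strict periodic supersolution.

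For the $u$-bound, inspection of the proof of Theorem~\ref{peristence-thm} shows that the paragraph producing $u(t+t_0,x;t_0,u_0,v_0)\ge\eta_1$ for $t\ge T_1(u_0,v_0)$ relies only on the linear instability of $(0,v_0^*)$ (via $\lambda(a_1-c_1 v^*)\ge\lambda(a_1^0-c_1^0 v_0^*)>0$) together with the attractor property $v\le v^*+\epsilon_0/2$ from Proposition~\ref{semi-trivial-solu-prop}. I would quote that step verbatim to produce $\eta_1>0$ and $T_1(u_0,v_0)>0$.

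For the $v$-bound, substituting $u\ge\eta_1$ into the $v$-equation makes $v$, for $t\ge t_0+T_1$, a subsolution of the scalar logistic equation
$$w_t=\mathcal{A}w+w\big(a_2(t,x)-b_2(t,x)\eta_1-c_2(t,x)w\big).$$
Using $\mathcal{A}(\text{const})=0$ for both dispersal operators and the identity $v^*_t=\mathcal{A}v^*+v^*(a_2-c_2 v^*)$, a direct computation gives
$$(v^*-\delta)_t-\mathcal{A}(v^*-\delta)-(v^*-\delta)\big(a_2-b_2\eta_1-c_2(v^*-\delta)\big)=b_2\eta_1 v^*+O(\delta),$$
uniformly positive for $\delta\in(0,\delta_0]$ small, since $\inf b_2\cdot\eta_1\cdot\inf v^*>0$. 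Thus $v^*-\delta_0$ is a strict periodic supersolution of the scalar equation, while $v^*+M$ is a supersolution for every $M\ge 0$. Monotone iteration of the time-$T$ map of the scalar equation starting at $v^*(0,\cdot)+M$ (with $M$ large enough to dominate $v(t_0+T_1,\cdot)$) then produces a decreasing sequence converging to a nonnegative $T$-periodic limit $\tilde v^*$, and scalar comparison yields $v\le\tilde v^*+\epsilon$ eventually, for any $\epsilon>0$.

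The main obstacle is producing a uniform positive gap $\tilde v^*\le v^*-2\eta_2$ on $\RR\times\RR$. If $\lambda(a_2^0-b_2^0\eta_1)\le 0$ then $\tilde v^*\equiv 0$ and the gap is at least $\inf v^*>0$; otherwise Proposition~\ref{one-species-prop} identifies $\tilde v^*$ as the unique positive periodic solution of the scalar equation, with $\tilde v^*(t,x)\to\tilde v_0^*(t)<v_0^*(t)$ as $|x|\to\infty$ by strict monotone dependence of the logistic-ODE periodic solution on its growth rate. Combining this asymptotic separation with continuity of $v^*-\tilde v^*$ on any bounded $x$-interval produces the uniform gap $\eta_2>0$. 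Setting $\eta:=\min(\eta_1,\eta_2)$ and choosing $T(u_0,v_0)$ large enough that both bounds are in force then completes the proof.
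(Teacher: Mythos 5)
Your route is the same as the paper's: quote the $u\ge\eta_1$ step from the proof of Theorem \ref{peristence-thm}, then view $v$ as a subsolution of the reduced scalar logistic equation with growth rate $a_2-b_2\eta_1$ and conclude a uniform gap below $v^*$ (the paper states this reduction and then simply asserts the existence of $\eta_2$). The genuine gap is in the branch $\lambda(a_2^0-b_2^0\eta_1)\le 0$ of your dichotomy, where you assert $\tilde v^*\equiv 0$. That claim is unjustified and in general false: exactly by the mechanism of Remark \ref{stable-rk}, the localized part of $a_2$ can make $\lambda(a_2-b_2\eta_1)>0$ even though the tail value $\lambda(a_2^0-b_2^0\eta_1)$ is nonpositive, and then the reduced equation admits a nontrivial nonnegative periodic ``bump'' state, so your decreasing iteration from $v^*+M$ need not converge to zero. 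Moreover, in that branch Proposition \ref{one-species-prop} is not applicable, so neither the uniform-in-$x$ (and in $t_0$) convergence of the comparison solution nor a uniform gap $\tilde v^*\le v^*-2\eta_2$ is covered by anything you can cite; monotone/Dini convergence of the iterates only gives locally uniform convergence, while the corollary's conclusion must be uniform on all of $\RR$.

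The repair is simple and removes the dichotomy: since $\frac{1}{T}\int_0^T a_2^0(t)\,dt\ge a_{2L}^0>0$, replace $\eta_1$ by a smaller $\eta_1'\in(0,\eta_1]$ with $\frac{1}{T}\int_0^T\big(a_2^0(t)-b_2^0(t)\eta_1'\big)dt>0$; the inequality $u\ge\eta_1\ge\eta_1'$ still yields the differential inequality for $v$, and now $\lambda(a_2^0-b_2^0\eta_1')>0$, so Proposition \ref{one-species-prop} applies to the reduced equation. With that, two smaller points in your argument should also be tightened: $v^*+M$ is a supersolution only for $M$ sufficiently large (which is all you need), and the uniform gap is better obtained directly rather than by ``asymptotic separation plus continuity,'' which presupposes the pointwise strict inequality $\tilde v^*<v^*$ on compact sets that you never establish. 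Your own residual computation shows $v^*-\delta_0$ is a strict supersolution of the reduced equation for small $\delta_0>0$; starting the equation there gives decreasing iterates whose limit, by the uniqueness and global attractivity in Proposition \ref{one-species-prop}, is again $\tilde v^*$, whence $\tilde v^*\le v^*-\delta_0$ uniformly on $\RR$, and the uniform convergence statement of that proposition then gives $v\le v^*-\eta_2$ for all large $t$, uniformly in $x$ and $t_0$, as required.
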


\begin{proof}
It follows from the arguments of Theorem \ref{peristence-thm}. To be more precise,
assume that  $(0,v_0^*)$ is a linearly unstable solution of \eqref{main-eq1}.
By the arguments of Theorem \ref{peristence-thm}, there is $\eta_1>0$ such that
for any  $(u_0,v_0)\in X^{++}\times X^{++}$, there is $T_1(u_0,v_0)>0$ such that
$$
u(t_0+t,x;t_0,u_0,v_0)\ge \eta_1
$$
for all $t\ge T_1(u_0,v_0)$, $x\in\RR$, and $t_0\in\RR$. This implies that
$$
v_t\le \mathcal{A} v+v(a_2(t,x)-b_2(t,x)\eta_1 -c_2(t,x) v),\quad x\in\RR
$$
for $t\ge T_1(u_0,v_0)$. It then follows that there is $\eta_2>0$ and $T_2(u_0,v_0)\ge T_1(u_0,v_0)$
such that
$$
v(t_0+t,x;t_0,u_0,v_0)\le v^*(t,x)-\eta_2,\quad x\in\RR
$$
for $t\ge T_2(u_0,v_0)$ and $t_0\in\RR$. Let $\eta=\min\{\eta_1,\eta_2\}$ and $T(u_0,v_0)=T_2(u_0,v_0)$, we have
$$
 u(t_0+t,x;t_0,u_0,v_0)\ge \eta,\,\,\, v(t+t_0,x;t_0,u_0,v_0)\le v^*(t+t_0,x)- \eta\quad \forall \,\, t\ge T(u_0,v_0),\,\, x\in\RR,\,\, t_0\in\RR.
 $$
This proves the corollary.
\end{proof}

\begin{remark}
 If $a_{1L}^0> \frac{c_{1M}^0a_{2M}^0}{c_{2L}^0}$, then $(0,v_0^*)$ is a linearly unstable solution of \eqref{main-eq1}, and if
 $a_{2L}^0> \frac{a_{1M}^0b_{2M}^0}{b_{1L}^0}$, then  $(u_0^*,0)$ is a linearly unstable solution of \eqref{main-eq1}.
\end{remark}

\section{Spreading Speeds}\label{spreading-speeds-section}

In this section, we investigate the invasion speed of the species $u$ to the species $v$ of  \eqref{main-eq1} and \eqref{main-eq2}.  Throughout this section,
 we assume that {\bf (H1)} holds.

 \subsection{Notations, definitions, and statements}

 In this subsection, we introduce some standing notions, definition of spreading speeds, and state the main results on spreading speeds.

 By (H1), $(0,v_0^*)$ is an unstable solution of \eqref{main-eq1} and $(u_0^*,0)$ is a globally stable solution of \eqref{main-eq1}.
   By Proposition \ref{semi-trivial-solu-prop} and Theorem \ref{stability-semitrivial-solu-thm},
$(0,v^*)$ is an unstable solution of \eqref{main-eq2} and
$$
\lim_{|x|\to \infty}|v^*(t,x)-v_0^*(t)|=0
$$
uniformly in $t\in\RR$.

To study the spreading speeds of \eqref{main-eq2}, we make the following standard change of variables,
\begin{equation}
\label{change-variable-eq2}
\tilde u=u,\quad \tilde v=v^*(t,x)-v.
\end{equation}
Dropping the tilde, \eqref{main-eq2} is transformed into
\begin{equation}
\label{main-eq2-1}
\begin{cases}
u_t=\mathcal{A}u+u\big(a_1(t,x)-b_1(t,x)u-c_1(t,x)(v^*(t,x)-v)\big),\quad x\in\RR\cr
v_t=\mathcal{A}v+b_2(t,x)\big(v^*(t,x)-v\big)u\cr
\qquad\qquad +v\big(a_2(t,x)-2c_2(t,x)v^*(t,x)+c_2(t,x)v\big),\quad x\in\RR.
\end{cases}
\end{equation}
Observe  that the trivial solution $E_0:=(0,0)$ of \eqref{main-eq2} becomes $\tilde E_0=(0,v^*)$,
the semitrivial solution $E_1:=(0,v^*)$ of \eqref{main-eq2} becomes $\tilde E_1=(0,0)$, and the semitrivial
solution $E_2:=(u^*,0)$ of \eqref{main-eq2} becomes $\tilde E_2=(u^*,v^*)$.
Note that $E_1$ is an unstable solution of \eqref{main-eq2-1}.

Consider \eqref{main-eq2-1}. For given $(u_0,v_0)\in X^+\times X^+$, we denote $(u(t,x;u_0,v_0),v(t,x;u_0,v_0))$ as
the solution of \eqref{main-eq2-1} with $(u(0,x;u_0,v_0),v(0,x;u_0,v_0))=(u_0(x),v_0(x))$.

By Proposition \ref{comparison-cooperative-prop},
we have the following lemma.

\begin{lemma}
\label{spreading-lm1}
For given $(u_1,v_1)$, $(u_2,v_2)\in X^+\times X^+$, if $0\le u_1\le u_2\le u^*(0,\cdot)$ and $0\le v_1\le v_2\le v^*(0,\cdot)$, then
$$
0\le u(t,\cdot;u_1,v_1)\le u(t,\cdot;u_2,v_2)\le u^*(t,x),\quad 0\le v(t,\cdot;u_1,v_1)\le v(t,\cdot;u_2,v_2)\le v^*(t,x)
$$
for all $t\ge 0$ and $x\in\RR$.
\end{lemma}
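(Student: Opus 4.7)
The approach is to view \eqref{main-eq2-1} as a cooperative system on the rectangle $\{0\le u,\ 0\le v\le v^*\}$ and apply Proposition \ref{comparison-cooperative-prop}. Denoting the right-hand sides of \eqref{main-eq2-1} by $f$ and $g$, one computes
$$f_v(t,x,u,v)=c_1(t,x)u,\qquad g_u(t,x,u,v)=b_2(t,x)\bigl(v^*(t,x)-v\bigr),$$
so the cooperativity inequalities $f_v\ge 0$, $g_u\ge 0$ hold exactly on that rectangle. Moreover, $(0,0)$ and $(u^*,v^*)$ are both solutions of \eqref{main-eq2-1}: the former is immediate, and for the latter, substituting $(u,v)=(u^*,v^*)$ reduces each equation of \eqref{main-eq2-1} to the one satisfied by $u^*$ (respectively $v^*$) as a semitrivial solution, via Proposition \ref{one-species-prop}.

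To remove the failure of cooperativity for $v>v^*$, I will replace the factor $v^*-v$ appearing in $g$ by $\phi(v^*-v)$, where $\phi$ is a smooth nondecreasing function with $\phi(r)=r$ for $r\ge 0$ and $\phi(r)=0$ for $r$ sufficiently negative. The resulting system is globally cooperative, agrees with \eqref{main-eq2-1} on $\{v\le v^*\}$, and still admits $(0,0)$ and $(u^*,v^*)$ as solutions, since both evaluate $v^*-v$ only at nonnegative values.

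I then proceed in three steps. First, the modified nonlinearities satisfy $f\equiv 0$ on $\{u=0\}$ and (modified) $g\ge 0$ on $\{v=0,\ u\ge 0\}$, so solutions with nonnegative initial data stay nonnegative by a standard maximum-principle argument. Second, I apply Proposition \ref{comparison-cooperative-prop}(1) with $\xi^*\equiv -\infty$ to the super-solution $(u^*,v^*)$ and the modified-system solution starting from $(u_i,v_i)$, $i=1,2$; since $u_i\le u^*(0,\cdot)$ and $v_i\le v^*(0,\cdot)$, the conclusion is that this solution stays in $[0,u^*]\times[0,v^*]$. In this region the modification is inactive, so by uniqueness it coincides with $(u,v)(\cdot;u_i,v_i)$ from \eqref{main-eq2-1}. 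Third, applying the same proposition to the modified-system solutions from $(u_1,v_1)$ and $(u_2,v_2)$ propagates the initial ordering $u_1\le u_2$, $v_1\le v_2$, yielding the remaining inequalities.

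The main obstacle is the failure of cooperativity outside the rectangle, so that Proposition \ref{comparison-cooperative-prop} cannot be applied to \eqref{main-eq2-1} directly; the smooth cutoff $\phi$ above sidesteps this. A related technicality is that the naive truncation $\max\{v^*-v,0\}$ is only Lipschitz, not $C^1$ as required by the proposition, which the smooth choice of $\phi$ resolves automatically; after this, the argument reduces to careful bookkeeping of three applications of the cooperative comparison.
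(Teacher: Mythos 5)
Your proposal is correct and follows essentially the same route as the paper, which simply invokes Proposition \ref{comparison-cooperative-prop} for the transformed cooperative system \eqref{main-eq2-1} with $(0,0)$ and $(u^*,v^*)$ as the bounding solutions. Your extra step of smoothly truncating the factor $v^*-v$ to get global cooperativity is a detail the paper leaves implicit (it uses an analogous truncation device $g_1$ only later, in the proof of Theorem \ref{main-thm1}(2)), and it is a legitimate way to justify applying the comparison principle off the invariant rectangle.
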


Let
\begin{align*}
X_1^+=\{u\in X^+\,|&\,  u(\cdot)< u^*(0,\cdot),\,\, u(x)=0\,\, {\rm for}\,\, x\gg 1,\\
&\,\, \liminf_{x\to -\infty}u(x)>0,\,\, \liminf_{x\to -\infty} (u^*(0,x)-u_0(x))>0\}
\end{align*}
and
\begin{align*}
X_2^+=\{v\in X^+\,|&\,  v(\cdot)< v^*(0,\cdot),\,\,  v(x)=0\,\, {\rm for}\,\, x\gg 1,\\
&\,\,\liminf_{x\to -\infty}v(x)>0,\,\, \liminf_{x\to -\infty} (v^*(0,x)-v_0(x))>0\}.
\end{align*}

\begin{definition}
\label{spreading-speed-cooperative}
Let
\begin{align*}
C_{\sup}=\big\{c\in\RR\,|\, &\limsup_{x\ge ct,t\to\infty} u^2(t,x;u_0,v_0)+v^2(t,x;u_0,v_0)=0,\\
&\,\,\,\forall\,\,
(u_0,v_0)\in X_1^+\times X_2^+\big\}
\end{align*}
and
\begin{align*}
C_{\inf}=\big\{c\in\RR\,|\, &\liminf_{x\le ct,t\to\infty}\min\{u(t,x;u_0,v_0), v(t,x;u_0,v_0)\}>0,\\
&\,\,\forall\,\,
(u_0,v_0)\in X_1^+\times X_2^+\big\}.
\end{align*}
Let
$$
c_{\sup}^*=\begin{cases} \inf\{c\,|\, c\in C_{\sup}\}\quad &{\rm if}\quad C_{\sup}\not=\emptyset\cr
\infty\quad &{\rm if}\quad C_{\sup}=\emptyset
\end{cases}
$$
and
$$
c_{\inf}^*=\begin{cases} \sup\{c\,|\, c\in C_{\inf}\}\quad &{\rm if}\quad C_{\inf}\not=\emptyset\cr
-\infty\quad &{\rm if}\quad C_{\inf}=\emptyset.
\end{cases}
$$
$[c_{\inf}^*,c_{\sup}^*]$ is called the {\rm spreading speed interval} of \eqref{main-eq2-1} or \eqref{main-eq2}.
\end{definition}

Before we state the main results on the spreading speeds of \eqref{main-eq1} and \eqref{main-eq2}, we
 recall the following proposition  proved  in \cite{KoRaSh}. To this end,
 we consider \eqref{main-eq1} and also
make the following standard change of variables,
\begin{equation*}
\tilde u=u,\quad \tilde v=v_0^*(t)-v.
\end{equation*}
Dropping the tilde, \eqref{main-eq1} is transformed into
\begin{equation}
\label{main-eq1-1}
\begin{cases}
u_t=\mathcal{A}u+u\big(a_1^0(t)-b_1^0(t)u-c_1^0(t)(v_0^*(t)-v)\big),\quad x\in\RR\cr
v_t=\mathcal{A}v+b_2^0(t)\big(v^*_0(t)-v\big)u+v\big(a_2^0(t)-2c_2^0(t)v^*_0(t)+c_2^0(t)v\big),\quad x\in\RR.
\end{cases}
\end{equation}

\begin{proposition}
\label{unperturbed-prop}
Consider \eqref{main-eq1}, i.e. \eqref{main-eq2} with $a_i=a_i^0$, $b_i=b_i^0$, and $c_i=c_i^0$ for $i=1,2$.
\begin{itemize}
 \item[(1)]  $c_{\inf}^*=c_{\sup}^*=c_0^*$. For  any $c<c_0^*(a_1^0,b_1^0,c_1^0,a_2^0,b_2^0,c_2^0)$, and $(u_0,v_0)\in X_1^+\times X_2^+$
 with $u_0(x)<u_0^*(0)$ and $v_0(x)<v_0^*(0)$, let
$(u^0(t,x;u_0,v_0),v^0(t,x;u_0,v_0))$ be
the solution of \eqref{main-eq1-1} with    $(u^0(0,x;u_0,v_0),v^0(0,x;u_0,v_0))=(u_0(x),v_0(x))$. Then
$$
\limsup_{x\le ct, t\to\infty} (u_0^*(t)-u^0(t,x;u_0,v_0))=0,\quad \limsup_{x\le ct, t\to\infty}(v_0^*(t)-v^0(t,x;u_0,v_0))=0.
$$

\item[(2)] Assume that (H2) holds. Then
$$
c_0^*=\inf_{\mu>0} \frac{\lambda(\mu,a_1^0-b_1^0 v_0^*)}{\mu}.
$$
\end{itemize}
\end{proposition}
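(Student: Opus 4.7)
The plan is to recognize this proposition as a direct translation of the main spreading-speed results of \cite{KoRaSh}, and to sketch how both parts reduce to that framework after the cooperative change of variables already performed in \eqref{main-eq1-1}.

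For part (1), I would first observe that \eqref{main-eq1-1} is a cooperative system on the order interval $[(0,0),(u_0^*(t),v_0^*(t))]$: on that interval the coupling $+c_1^0(t) v \cdot u$ in the $u$-equation and $b_2^0(t)(v_0^*(t)-v)u$ in the $v$-equation are nonnegative, and the nonlinearities are $C^1$ as required. Under (H1), the equilibrium $\tilde E_1=(0,0)$ is linearly unstable while $\tilde E_2=(u_0^*,v_0^*)$ is linearly stable and globally attracts the interior of this order interval. This places \eqref{main-eq1-1} squarely in the scope of the abstract monotone periodic semiflow theorem for monostable spreading speeds (Liang--Zhao type, with the appropriate variant for random and for nonlocal dispersal). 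Monotonicity from Lemma \ref{spreading-lm1} combined with a standard sub-/super-solution squeezing then yields a single speed $c_0^*=c_{\inf}^*=c_{\sup}^*$, and the locally uniform convergence to $(u_0^*,v_0^*)$ behind the front, a consequence of Proposition \ref{one-species-prop} applied componentwise, delivers both limits $\limsup_{x\le ct,t\to\infty}(u_0^*(t)-u^0(t,x;u_0,v_0))=0$ and $\limsup_{x\le ct,t\to\infty}(v_0^*(t)-v^0(t,x;u_0,v_0))=0$.

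For part (2), I would prove the identity $c_0^*=\inf_{\mu>0}\lambda(\mu,a_1^0-c_1^0 v_0^*)/\mu$ by the standard two-sided bound. For the upper bound, for each $\mu>0$ I construct the exponential super-solution $(e^{-\mu(x-ct)}\phi(t),0)$, where $\phi$ is the positive $T$-periodic principal eigenfunction furnished by Proposition \ref{principal-spectrum-prop1}(4) applied to the linearization at $\tilde E_1$. This is a super-solution of \eqref{main-eq1-1} as soon as $c\mu\ge \lambda(\mu,a_1^0-c_1^0 v_0^*)$, and comparison via Proposition \ref{comparison-cooperative-prop} against suitable data in $X_1^+\times X_2^+$ forces $c_0^*\le \lambda(\mu,a_1^0-c_1^0 v_0^*)/\mu$ for every $\mu$, hence the infimum. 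For the lower bound, which is where (H2) enters, I would build a sub-solution consisting of a truncated exponential profile in the $u$-component slaved to a linear multiple in the $v$-component, with $\mu$ chosen at the critical value minimizing $\lambda(\mu,a_1^0-c_1^0 v_0^*)/\mu$. The condition (H2) is designed so that the nonlinear residual, which involves the cross terms $c_1^0 u v$, $c_2^0 v^2$ and $b_2^0 v u$ bounded using the coefficient estimates in \eqref{coefficient-eq}, stays non-positive uniformly in $t\in\RR$, so the sub-solution property is genuine.

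The main obstacle will be the linear-determinacy step in the lower bound of part (2): matching coefficients at the leading exponential and showing that the residual has the right sign is delicate, and the two inequalities composing (H2) are precisely what emerges from that matching. Since this calculation is already executed in \cite{KoRaSh}, my proposal in practice reduces to invoking the relevant lemmas there; for the random-dispersal case one may alternatively appeal to the Lewis--Li--Weinberger linear-determinacy framework \cite{LeLiWe}, while for nonlocal dispersal the principal-spectrum machinery of Proposition \ref{principal-spectrum-prop1} plays the analogous role and the construction of $\phi$ is the key technical input.
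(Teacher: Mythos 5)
The paper itself does not prove Proposition \ref{unperturbed-prop}; it is recalled from \cite{KoRaSh}, so your final fallback of invoking that reference agrees with the paper. However, the argument you sketch for part (2) has a genuine flaw: you have the roles of (H2) reversed, and the key step of your upper bound would fail. The pair $(e^{-\mu(x-ct)}\phi(t),0)$ is not a super-solution of the cooperative system \eqref{main-eq1-1}: the second equation contains the source term $b_2^0(t)(v_0^*(t)-v)u$, so with $v^+\equiv 0$ the required inequality becomes $0\ge b_2^0(t)v_0^*(t)u^+(t,x)$, which is violated wherever $u^+>0$, and Proposition \ref{comparison-cooperative-prop} cannot be applied. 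More fundamentally, the bound $c_{\sup}^*\le\inf_{\mu>0}\lambda(\mu,a_1^0-c_1^0v_0^*)/\mu$ is exactly the linear-determinacy statement and cannot be obtained for each $\mu$ without (H2): in the transformed $u$-equation the coupling term $+c_1^0(t)uv$ is positive, so the nonlinearity is not sub-tangential to the scalar linearization $u_t=\mathcal{A}u+(a_1^0(t)-c_1^0(t)v_0^*(t))u$, and one needs a two-component exponential super-solution $(Ke^{-\mu^*(x-c^*t)}\phi^*(t),Ke^{-\mu^*(x-c^*t)}\psi^*(t))$ together with the inequalities $c_i^0v^+\le b_i^0u^+$; this is precisely the claim \eqref{main-eq1-5} in the proof of Theorem \ref{main-thm1}(2), and (H2) is what makes it true. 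If your unconditional upper bound were correct, part (2) would hold without (H2), contradicting the very structure of the proposition.

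Conversely, the lower bound $c_0^*\ge\inf_{\mu>0}\lambda(\mu,a_1^0-c_1^0v_0^*)/\mu$, which you attribute to (H2) via a delicate sub-solution construction, needs no such condition: since $v\ge 0$ in the transformed variables, the $u$-component satisfies $u_t\ge\mathcal{A}u+u(a_1^0(t)-b_1^0(t)u-c_1^0(t)v_0^*(t))$, and scalar KPP theory (exactly as used with \eqref{single-eq1} and \cite[Theorem 2.2]{KoSh} in the proof of Theorem \ref{main-thm1}(1)) gives spreading at least at the linear speed under (H1) alone. So the correct division of labor is: lower bound under (H1), upper bound under (H2). Your outline of part (1) (cooperativity on the order interval, monotone periodic semiflow theory, convergence behind the front) is reasonable, though the convergence to $(u_0^*(t),v_0^*(t))$ behind the front does not follow from Proposition \ref{one-species-prop} applied componentwise; it rests on the global stability of $(u_0^*,0)$ guaranteed by (H1), which is part of what \cite{KoRaSh} establishes. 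Also note the statement's formula should read $\lambda(\mu,a_1^0-c_1^0v_0^*)$ (as in the introduction), not $a_1^0-b_1^0v_0^*$; your version is the intended one.
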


We call $c_0^*$ the {\it spreading speed} of \eqref{main-eq1}. To indicate the dependence of $c_0^*$ on the coefficients of \eqref{main-eq1}, we
may write it as $c_0^*(a_1^0,b_1^0,c_1^0,a_2^0,b_2^0,c_2^0)$.
We now state the mains results on the spreading speeds of \eqref{main-eq1} and \eqref{main-eq2}.
The first theorem is on the continuity of the spreading speed of \eqref{main-eq1} with respect to spatially homogeneous time periodic perturbations.

\begin{theorem}
\label{main-thm0}
Consider \eqref{main-eq1}.
\begin{itemize}
\item[(1)] Assume that $\{(a_1^n,b_1^n,c_1^n,a_2^n,b_2^n,c_2^n)\}$ is a sequence of $T$-periodic H\"older continuous  positive functions
and
$$
\lim_{n\to\infty} \big(|a_i^n(t)-a_i^0(t)|+|b_i^n(t)-b_i^0(t)|+|c_i^n(t)-c_i^0(t)|\big)=0,\quad i=1,2.
$$
Then
$$
\liminf_{n\to\infty} c_0^*(a_1^n,b_1^n,c_1^n,a_2^n,b_2^n,c_2^n)\ge c_0^*(a_1^0,b_1^0,c_1^0,a_2^0,b_2^0,c_2^0).
$$

\item[(2)] Assume that  $\{(a_1^n,b_1^n,c_1^n,a_2^n,b_2^n,c_2^n)\}$ is as in (1) and (H2) holds. Then
$$
\lim_{n\to\infty} c_0^*(a_1^n,b_1^n,c_1^n,a_2^n,b_2^n,c_2^n)=c_0^*(a_1^0,b_1^0,c_1^0,a_2^0,b_2^0,c_2^0).
$$
\end{itemize}
\end{theorem}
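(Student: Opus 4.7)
Fix $c < c_0^* := c_0^*(a_1^0,\ldots,c_2^0)$, choose $c' \in (c, c_0^*)$, and pick a front-type initial datum $(u_0, v_0) \in X_1^+ \times X_2^+$ with $\sup u_0 \le u_0^*(0) - 3\delta$, $\sup v_0 \le v_0^*(0) - 3\delta$ for some $\delta > 0$, and $u_0 = v_0 = 0$ for $x \ge L$. Since the semi-trivials $u_0^{*,n}, v_0^{*,n}$ of the $n$-th unperturbed system converge uniformly in $t$ to $u_0^*, v_0^*$, this $(u_0, v_0)$ also lies in the $n$-th analogues of $X_1^+, X_2^+$ for large $n$. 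Pick $T^* \in T\NN$ large enough that Proposition \ref{unperturbed-prop}(1) yields $u^0(T^*, x; u_0, v_0) \ge u_0^*(0) - \delta$ and $v^0(T^*, x; u_0, v_0) \ge v_0^*(0) - \delta$ on $\{x \le c'T^*\}$ (using $T$-periodicity). Subtracting the $n$-th and $0$-th equations of \eqref{main-eq1-1} and applying Gronwall in the $L^\infty_x$ norm on $[0, T^*]$ gives $\|u^n(T^*,\cdot) - u^0(T^*,\cdot)\|_\infty + \|v^n(T^*,\cdot) - v^0(T^*,\cdot)\|_\infty \to 0$, so for $n \ge N$ the same plateau bound (with $2\delta$ in place of $\delta$) holds for $(u^n, v^n)$ at $t = T^*$.

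\textbf{Iteration.} Since the $n$-th system is spatially homogeneous, its solution operator commutes with translations; since its transformed version \eqref{main-eq2-1} is cooperative, Lemma \ref{spreading-lm1} applies. With $d = c'T^* - L$, the $T^*$-plateau dominates the translate $(u_0(\cdot - d), v_0(\cdot - d))$ pointwise (using $u_0 \le u_0^*(0) - 3\delta$ on its support, the plateau bound $u^n(T^*,\cdot) \ge u_0^*(0) - 2\delta$ on $\{x \le c'T^*\}$, and the translate vanishing for $x > c'T^*$). Monotonicity plus spatial homogeneity gives $u^n(2T^*, x; u_0, v_0) \ge u^n(T^*, x - d; u_0, v_0)$, extending the plateau at $t = 2T^*$ to $\{x \le 2c'T^* - L\}$. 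Iterating, at $t = kT^*$ the plateau covers $\{x \le kc'T^* - L\}$ with $L$ independent of $k$. Times $t \in (kT^*, (k+1)T^*)$ interpolate by comparison with the spatially homogeneous time-periodic logistic equation from below. Since $c < c'$, $\liminf_{x \le ct,\, t \to \infty} \min\{u^n, v^n\} > 0$, so $c \in C_{\inf}$ for the $n$-th system when $n \ge N$; letting $c \uparrow c_0^*$ yields Part (1).

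\textbf{Continuity under (H2) (Part (2)).} Proposition \ref{unperturbed-prop}(2) gives $c_0^*(a_1^0, \ldots) = \inf_{\mu > 0} \lambda(\mu, a_1^0 - c_1^0 v_0^*)/\mu$ under (H2). (H2) is a set of pointwise-in-$t$ strict inequalities among continuous coefficients, so it is preserved under uniform perturbations and the same formula holds for $c_0^*(a_1^n,\ldots)$ when $n$ is large. Standard continuous dependence for the logistic ODE $w_t = w(a_2(t) - c_2(t)w)$ gives $v_0^{*,n} \to v_0^*$ uniformly in $t$, and Proposition \ref{principal-spectrum-prop1}(1) together with Duhamel's formula for $\Phi(T, 0; \mu, a)$ shows $\lambda(\mu, a_1^n - c_1^n v_0^{*,n}) \to \lambda(\mu, a_1^0 - c_1^0 v_0^*)$ uniformly on compact subsets of $\mu \in (0, \infty)$. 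In both the random and nonlocal cases, $\lambda(\mu, \cdot)/\mu \to +\infty$ as $\mu \to 0^+$ and $\mu \to \infty$ uniformly in $n$, so the infima are attained on a common compact $\mu$-interval. An infimum of a continuous family is upper semi-continuous, hence $\limsup_n c_0^*(a_1^n,\ldots) \le c_0^*(a_1^0,\ldots)$; combined with Part (1), equality follows.

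\textbf{Main obstacle.} The crux of Part (1) is controlling the iteration so that the buffer $L$ does not grow with the block index $k$. This holds because the shift $d = c'T^* - L$ is chosen so that the translate's support lies in the zone where the plateau dominates by the fixed margin $\delta$, with no cumulative front-erosion. In Part (2), the subtle point is the uniform behavior of $\lambda(\mu, \cdot)/\mu$ at $\mu = 0^+$ and $\mu = \infty$, needed to confine the infimum to a compact set on which uniform continuity of $\lambda(\mu, \cdot)$ transfers to the infimum.
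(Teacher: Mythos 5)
Your argument follows essentially the same route as the paper's: for (1), establish a plateau for the unperturbed transformed system \eqref{main-eq1-1} at a finite time, transfer it to the $n$-th system by finite-time continuous dependence on the coefficients, and then propagate it by translation invariance plus the cooperative comparison principle; for (2), use the linear-determinacy formula under (H2) and the convergence of $\lambda(\mu,\cdot)$ uniformly on bounded $\mu$-sets (your attention to the behavior of $\lambda(\mu,\cdot)/\mu$ as $\mu\to 0^+$ and $\mu\to\infty$ is in fact slightly more careful than the paper, which passes to the limit of the infima directly). One bookkeeping correction in (1): with the per-block shift $d=c'T^*-L$, the iteration $u^n(kT^*,x)\ge u^n((k-1)T^*,x-d)$ gives a plateau on $\{x\le kc'T^*-(k-1)L\}$, not $\{x\le kc'T^*-L\}$, so the buffer erodes by $L$ per block and the realized speed is $c'-L/T^*$; this is harmless since $c<c'$ and $T^*$ may additionally be taken larger than $L/(c'-c)$ (or simply take $L=0$, as the paper does by supporting $(u_0,v_0)$ in $\{x\le 0\}$), but your closing claim of ``no cumulative front-erosion'' is not literally correct as stated.
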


The second theorem is on the effect of  localized spatial
variations on the spreading speeds of \eqref{main-eq1}.

\begin{theorem}
\label{main-thm1}
Consider \eqref{main-eq2}.
\begin{itemize}
\item[(1)]
Assume (H1). Then $c_{\sup}^*\ge c_{\inf}^*\ge c_0^*$.

\item[(2)]  Assume (H2).
 Then
$c_{\sup}^*=c_{\inf}^*=c_0^*.$
\end{itemize}
\end{theorem}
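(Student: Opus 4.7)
The theorem consists of a lower bound $c_{\inf}^*\ge c_0^*$ under (H1) and a matching upper bound $c_{\sup}^*\le c_0^*$ under (H1) and (H2). Both parts exploit the cooperative structure of the transformed system \eqref{main-eq2-1}, the spreading result for \eqref{main-eq1-1} (Proposition \ref{unperturbed-prop}(1)), and the geometric fact that the coefficients of \eqref{main-eq2-1} coincide with those of \eqref{main-eq1-1} outside a bounded set $\{|x|\le R\}$, together with $u^*(t,x)\to u_0^*(t)$ and $v^*(t,x)\to v_0^*(t)$ as $|x|\to\infty$. For Part (1), I would fix $c<c'<c_0^*$ and $(u_0,v_0)\in X_1^+\times X_2^+$, and bound the perturbed solution from below by a suitable unperturbed solution that spreads at speed $c'$. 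After a short time $t_1>0$, regularity and the asymptotic positivity of $(u_0,v_0)$ at $-\infty$ yield $(\tilde u_0,\tilde v_0)$ that lies pointwise below $(u(t_1,\cdot),v(t_1,\cdot))$ and belongs to the admissible class $X_1^+\times X_2^+$ for the unperturbed system. Proposition \ref{unperturbed-prop}(1) then provides $\limsup_{x\le c't,\,t\to\infty}(u_0^*(t)-u^0(t,x;\tilde u_0,\tilde v_0))=0$ and its $v^0$-analogue. Because the two equations coincide for $|x|>R$, the cooperative comparison principle (Proposition \ref{comparison-cooperative-prop}) together with persistence on $[-R,R]$ (Theorem \ref{peristence-thm}, Corollary \ref{persistence-cor}) produces $(u(t_1+t,x),v(t_1+t,x))\ge(u^0(t,x),v^0(t,x))$ on the moving-frame set $\{x\le c't\}$ for all large $t$, hence $c\in C_{\inf}$.

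For Part (2), (H2) supplies the variational formula $c_0^*=\inf_{\mu>0}\lambda(\mu,a_1^0-c_1^0 v_0^*)/\mu$ via Proposition \ref{unperturbed-prop}(2). Given $c>c_0^*$, choose $\mu>0$ with $c\mu>\lambda(\mu,a_1^0-c_1^0 v_0^*)$, and let $\phi(t),\psi(t)>0$ be the $T$-periodic factors in the principal eigenfunction of the cooperative linearization of \eqref{main-eq1-1} at $\tilde E_1=(0,0)$ relative to $\mathcal{A}(\mu)$. For constants $A,B>0$, with $A/B$ chosen via (H2) to absorb the coupling term $c_1^0 \bar u\bar v$ in the $\bar u$-equation and the cross term $b_2^0 v_0^*\bar u$ in the $\bar v$-equation, the pair
$$
\bar u(t,x)=A\phi(t)e^{-\mu(x-ct)},\qquad \bar v(t,x)=B\psi(t)e^{-\mu(x-ct)}
$$
is a super-solution of the nonlinear unperturbed system \eqref{main-eq1-1} that moreover satisfies $\bar u(t,ct)\ge u^*(t,ct)$ and $\bar v(t,ct)\ge v^*(t,ct)$ for all $t$. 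Since the coefficients of \eqref{main-eq2-1} agree with those of \eqref{main-eq1-1} for $|x|>R$ and $ct>R$ for all sufficiently large $t$, the same pair is automatically a super-solution of \eqref{main-eq2-1} on $\{x\ge ct\}$. Applying Proposition \ref{comparison-cooperative-prop}(2) on this moving half-line yields $(u,v)\le(\bar u,\bar v)$ there, and the exponential decay in $x-ct$ forces $(u,v)\to(0,0)$ uniformly on $\{x\ge c't\}$ for every $c'>c$, so $c_{\sup}^*\le c$.

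The main obstacle lies in Part (1): the unperturbed solution $(u^0,v^0)$ is not a sub-solution of the perturbed system on the interior region $\{|x|\le R\}$, where the two coefficient families may differ arbitrarily, so a naive global comparison fails. The remedy is to split the argument into the exterior region $\{|x|>R\}$, where the two equations coincide and cooperative comparison applies cleanly, and the bounded interior $[-R,R]$, which is controlled by persistence on any compact interval; a careful moving-frame stitching then transports the unperturbed spreading rate $c'$ to the perturbed dynamics. A parallel, easier obstacle in Part (2) is verifying the super-solution inequalities for the full nonlinear cooperative system; the role of (H2) is precisely to ensure that the constants $A,B$ and eigenfunctions $\phi,\psi$ can be chosen to close these inequalities at the matching speed $c$.
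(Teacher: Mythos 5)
Your overall strategy (cooperative comparison for the transformed system, exponential super-solutions plus linear determinacy for the upper bound) points in the right direction, but both parts have genuine gaps, and in each case the gap is exactly where the paper's proof does something different. For part (1), your plan hinges on obtaining $(u(t_1+t,\cdot),v(t_1+t,\cdot))\ge (u^0(t,\cdot),v^0(t,\cdot))$ on $\{x\le c't\}$ by splitting into the exterior $\{|x|>R\}$ and the interior $[-R,R]$. This cannot be closed as stated: Proposition \ref{comparison-cooperative-prop}(2) applied on the exterior region requires the ordering $u\ge u^0$, $v\ge v^0$ on the \emph{whole} interior region for \emph{all} $t\ge 0$, but behind its front $(u^0,v^0)$ approaches $(u_0^*(t),v_0^*(t))$, while persistence (Theorem \ref{peristence-thm}, Corollary \ref{persistence-cor}) only yields a floor $\eta$ which may be far below $u_0^*,v_0^*$; indeed, if the localized coefficients are unfavorable on $[-R,R]$ the ordering $u\ge u^0$ genuinely fails there, so the ``stitching'' is not a technicality but the missing argument. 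The paper avoids any domination of $(u^0,v^0)$ across the inhomogeneity: it first bounds the $u$-component below by the scalar KPP equation \eqref{single-eq1} (whose spreading speed in the locally inhomogeneous medium is $\tilde c^*>0$ by \cite[Theorem 2.2]{KoSh}), upgrades this to uniform positivity of both components behind a slow front via Lemma \ref{spreading-lm2}(2), and then \emph{restarts} the propagation from translated initial data $u_0(\cdot-L),v_0(\cdot-L)$ with $L$ far to the right, where the shifted perturbed dynamics converges locally uniformly to the homogeneous dynamics (\eqref{spreading-eq3}); iterating these restarts with the monotonicity of Lemma \ref{spreading-lm1} gives speed $c$ for every $c<c_0^*$.

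For part (2), two of your claims are not correct as stated. First, the transformed perturbed system \eqref{main-eq2-1} does \emph{not} coincide with \eqref{main-eq1-1} for $|x|>R$: even where $a_i,b_i,c_i$ equal $a_i^0,b_i^0,c_i^0$, the equations involve $v^*(t,x)$, which only converges to $v_0^*(t)$ as $|x|\to\infty$; this is why the paper works with the $\epsilon$-inflated homogeneous system \eqref{main-eq1-2}, pushes the moving boundary $\xi^*(t;K)$ far to the right by taking $K$ large (so that both the coefficient discrepancy and $|v^*-v_0^*|$ are absorbed for all $t\ge 0$, not only for $ct>R$), and lets $\epsilon\to 0$ at the end. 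Second, your exponential pair cannot be a super-solution of the \emph{nonlinear} cooperative system as written, because on the interior side the comparison requires $u^+,v^+$ to dominate the a priori bounds $u^*,v^*$, and there $v^+$ exceeds $v_0^*$, so the cross term $b_2(v^*-v)u$ loses its sign and the system loses cooperativity; the paper repairs this with the truncation $g_1$ and the penalization term $K^*|v_0^*-g_1(v)|$ in $F_\epsilon,G_\epsilon$, and the choice of $\xi^*(t;K)$ so that $u^+,v^+\ge M^*$ to the left of the boundary. Finally, the ratio constraint you attribute to (H2) cannot be imposed by freely choosing $A/B$: the pair must solve the linear periodic system, so the ratio $\psi_\epsilon^*/\phi_\epsilon^*$ is determined, and the inequalities $c_i^\epsilon\psi_\epsilon^*\le b_i^\epsilon\phi_\epsilon^*$ are \emph{proved} from (H2) via a scalar super-solution argument and Proposition \ref{nonhomogeneous-prop}. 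These are the substantive steps your sketch leaves open.
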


 We will prove Theorem \ref{main-thm0} and Theorem \ref{main-thm1} in next two subsections, respectively, and conclude this subsection with the following lemma, which will be used in the proofs of Theorems \ref{main-thm0} and \ref{main-thm1}.

 \begin{lemma}
\label{spreading-lm2}
\begin{itemize}
\item[(1)] Given $(u_n,v_n)\in X^+\times X^+$  with $u_n\le u^*(0,x)$ and $v_n(x)\le v^*(0,x)$ for $n\ge 0$,
 if $(u_n(x),v_n(x))\to (u_0(x),v_0(x))$ as $n\to\infty$ uniformly in bounded  subsets of $\RR$, then
$$
(u(t,x;u_n,v_n),v(t,x;u_n,v_n))\to (u(t,x;u_0,v_0),v(t,x;u_0,v_0))
$$
as $n\to\infty$ uniformly in $t$ in bounded subsets of $\RR^+$,  and $x$ in bounded subsets of $\RR$.

\item[(2)] For given $c\in\RR$ and $(u_0,v_0)\in X_1^+\times X_2^+$,
if $\liminf_{x\le ct, t\to\infty} u(t,x;u_0,v_0)>0$, then for any $c^{'}<c$,
$$
\limsup_{x\le c^{'}t,t\to\infty}u(t,x;u_0,v_0)>0,\quad \liminf_{x\le c^{'}t, t\to\infty}v(t,x;u_0,v_0)>0.
$$
\end{itemize}
\end{lemma}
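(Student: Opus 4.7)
The plan is to prove Part (1) by standard continuous dependence on initial data for the cooperative system \eqref{main-eq2-1}, and Part (2) by combining the trivial spatial inclusion $\{x\le c't\}\subset\{x\le ct\}$ with a sliding-compactness argument for the $v$-component.

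For Part (1), Lemma \ref{spreading-lm1} traps each $(u(\cdot,\cdot;u_n,v_n),v(\cdot,\cdot;u_n,v_n))$ in $[0,u^*]\times[0,v^*]$, so the sequence is uniformly bounded. In the random-dispersal case, interior parabolic $L^p$ and Schauder estimates yield uniform-in-$n$ local H\"older bounds; Arzel\`a--Ascoli extracts a convergent subsequence, and uniqueness of the Cauchy problem in $X\times X$ pins the limit down as $(u(\cdot,\cdot;u_0,v_0),v(\cdot,\cdot;u_0,v_0))$, forcing locally uniform convergence of the full sequence. In the nonlocal-dispersal case, $\mathcal{A}$ is bounded on $X$ and the solution is the unique fixed point of the associated Duhamel integral equation; subtracting the integral equations for the two initial data and applying Gronwall on any bounded spatial window $|x|\le R$ (the compact support of $\kappa$ restricts the convolution to $|x|\le R+r$ over a bounded time interval) delivers the locally uniform convergence directly.

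For Part (2), the $u$-conclusion is immediate since $\inf_{x\le c't}u(t,x;u_0,v_0)\ge \inf_{x\le ct}u(t,x;u_0,v_0)$ whenever $c'<c$ and $t>0$, so the $\liminf$ over the smaller region inherits the strict positivity from the larger one. For the $v$-conclusion, set $\eta_0=\liminf_{x\le ct,\,t\to\infty}u(t,x;u_0,v_0)>0$, fix $c'<c$, and argue by contradiction: assume $v(t_n,x_n;u_0,v_0)\to 0$ along some $t_n\to\infty$ with $x_n\le c't_n$. Define the shifted solutions $(U_n(t,x),V_n(t,x))=(u(t+t_n,x+x_n;u_0,v_0),v(t+t_n,x+x_n;u_0,v_0))$, which solve \eqref{main-eq2-1} with coefficients evaluated at $(t+t_n,x+x_n)$. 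Passing to a subsequence using $T$-periodicity in $t$, the structural assumption that the coefficients are spatially constant for $|x|\gg 1$, and the continuous-dependence machinery of Part (1) extended to variable coefficients, $(U_n,V_n)$ converges locally uniformly to an entire bounded solution $(U_\infty,V_\infty)$ of a limit cooperative system (spatially homogeneous when $|x_n|\to\infty$, a translate of the original when $x_n$ stays bounded). The key observation is that for each fixed $(t,x)$,
\begin{equation*}
c(t+t_n)-(x+x_n)=(ct-x)+(c-c')t_n+(c't_n-x_n)\to+\infty,
\end{equation*}
so the point $(t+t_n,x+x_n)$ eventually lies in $\{y\le cs\}$ and hence $U_\infty(t,x)\ge\eta_0$ on all of $\RR\times\RR$. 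But $V_\infty\ge 0$ with $V_\infty(0,0)=0$, while the $V_\infty$-equation carries the uniformly positive forcing $\bar b_2\,\bar v^*\,U_\infty\ge\delta>0$. In the random case the parabolic strong maximum principle forces $V_\infty>0$ everywhere, contradicting $V_\infty(0,0)=0$; in the nonlocal case the same contradiction follows from a Duhamel representation of $V_\infty$ combined with the non-negativity and full support of $\kappa$ near the origin.

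The main obstacle is the limit extraction and strong-positivity step in the nonlocal case, where $\mathcal{A}$ provides no regularization: the convergence $(U_n,V_n)\to(U_\infty,V_\infty)$ takes place only in the locally uniform topology, so one must verify that $(U_\infty,V_\infty)$ is a genuine mild solution and that the strict positivity of $V_\infty$ propagates from the strictly positive forcing through the bounded convolution operator, arguments parallel to those already exploited in \cite{HeNgSh,NgRa,ShZh1} for related nonlocal cooperative systems.
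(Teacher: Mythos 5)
Your part (1) follows the same route the paper takes (it simply invokes the arguments of \cite[Lemma 3.2]{KoRaSh}): uniform bounds from Lemma \ref{spreading-lm1} plus continuous dependence. One imprecision: the nonlocal operator does not have finite speed of propagation --- the mild solution involves all iterated convolutions $\kappa^{*k}$, so the value at $x$ on $[0,T]$ is not determined by data on $|y-x|\le R+r$. The Gronwall step should be run in a weighted sup-norm, or with a tail estimate using the uniform bound $0\le u\le u^*$, $0\le v\le v^*$; this is repairable and does not change the conclusion. The trivial inclusion argument for the $u$-part of (2) is also fine.

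The genuine gap is in the $v$-part of (2) in the nonlocal case. Your contradiction argument needs to extract from the shifts $(U_n,V_n)(t,x)=(u,v)(t+t_n,x+x_n)$, with $t_n\to\infty$, a locally uniformly convergent subsequence whose limit $(U_\infty,V_\infty)$ is an entire solution of a limiting system, and then to differentiate $V_\infty$ at its minimum $(0,0)$. When $\mathcal{A}$ is the nonlocal dispersal operator there is no smoothing: boundedness gives no equicontinuity in $x$, and the modulus of continuity of $u(t,\cdot),v(t,\cdot)$ is not uniform as $t\to\infty$ (the paper itself notes, in the proof of Theorem \ref{peristence-thm}, that limiting states of \eqref{main-eq2} may be only semi-continuous in $x$). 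So the compactness you rely on is precisely what is unavailable; you flag this as ``the main obstacle'' but do not close it, and the cited references do not supply locally uniform convergence of arbitrary bounded solution sequences along $t_n\to\infty$. (A smaller bookkeeping point: the forcing in the $V$-equation is $b_2(v^*-V)U$, not $b_2v^*U$; at the minimum point this is harmless after rewriting, but only once the limit is known to be a genuine solution.) The paper's proof is built to avoid exactly this: it never passes to a limit along large times. It applies part (1) (continuous dependence in initial data and coefficients) only over the fixed finite time interval $[0,n_1T]$ to translates of one simple truncated-constant profile $(\tilde u_l,\tilde v_l)$, gets the uniform lower bound $\eta$ from Corollary \ref{persistence-cor} applied to constant initial data, and then propagates $u,v\ge\eta/2$ over the moving region $x\le c't$ by the comparison principles (Propositions \ref{comparison-prop} and \ref{comparison-cooperative-prop}) iterated in steps of $n_1T$. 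Your scheme does work for $\mathcal{A}u=u_{xx}$, where parabolic estimates give the compactness and the strong maximum principle applies, but as written it does not prove the lemma for nonlocal dispersal; either supply a bona fide compactness/limit-equation argument there or switch to the paper's finite-time comparison iteration.
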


\begin{proof}
(1) It can be proved by the similar arguments as those in \cite[Lemma 3.2]{KoRaSh}.

(2) Let $\sigma=\liminf_{x\le ct, t\to\infty} u(t,x;u_0,v_0)$. Then there is $n_0\in\NN$ such that
\begin{equation}
\label{aaux-eq1}
u(t,x;u_0,v_0)\ge \sigma/2\quad \forall \, t\ge n_0T,\,\, x\le ct.
\end{equation}
Let $\tilde u_0\equiv \min\{ \sigma/2, u_0^*(0)/2\}$ and $\tilde v_0\equiv 0$. By Corollary \ref{persistence-cor}, there are $\eta>0$ and  $n_1\in\NN$ such that
\begin{equation}
\label{aaux-eq2}
u(t,x;\tilde u_0,\tilde v_0)\ge \eta,\quad v(t,x;\tilde u_0,\tilde v_0)\ge \eta\quad \forall\,\, t\ge n_1T,\,\, x\in\RR,
\end{equation}
and
\begin{equation}
\label{aaux-eq2-1}
u_0(t,x;\tilde u_0,\tilde v_0)\ge \eta,\quad v_0(t,x;\tilde u_0,\tilde v_0)\ge \eta\quad \forall\,\, t\ge n_1T,\,\, x\in\RR,
\end{equation}
where  $(u_0(t,x;\tilde u_0,\tilde v_0)$, $v_0(t,x;\tilde u_0,\tilde v_0))$
is the solution of \eqref{main-eq1-1} with $(u_0(0,x;\tilde u_0,\tilde v_0)$, $v_0(0,x;\tilde u_0,\tilde v_0))=(\tilde u_0(x),\tilde v_0(x))$.

We claim that there is $L_0>0$ such that for any $l\in\RR$,
\begin{equation}
\label{aaux-eq3}
u(n_1T,x;\tilde u_l,\tilde v_l)\ge \eta/2,\quad v(n_1T,x;\tilde u_l,\tilde v_l)\ge \eta/2\quad \forall\,\, x\le l-L_0,
\end{equation}
where $(\tilde u_l,\tilde v_l)\in X^+\times X^+$ with $\tilde u_l(x)\le \min\{\sigma/2,u_0^*(0)/2\}$ for $x\in\RR$, $\tilde u_l(x)=\min\{\sigma/2,u_0^*(0)/2\}$ for $x\le l-1$,
 $\tilde u_l(x)=0$ for $x\ge l$, and $\tilde v_l\equiv 0$.

 In fact, assume that the claim is not true. Then there are $L_n\to\infty$, and $x_n,l_n\in\RR$ such that
 $x_n\le l_n-L_n$ and
 \begin{equation}
 \label{new-aux-eq1}
 u(n_1T,x_n;\tilde u_{l_n},\tilde v_{l_n})<\eta/2,\quad {\rm or}\quad v(n_1T,x_n;\tilde u_{l_n},\tilde v_{l_n})<\eta/2,\quad \forall\,\, n\ge 1.
 \end{equation}
 If $\{x_n\}$ is a bounded sequence, then
 $$\liminf_{n\to\infty}l_n\ge \liminf_{n\to\infty}(L_n+x_n)=\infty,
 $$
 which implies that
 $$
 \lim_{n\to\infty} (\tilde u_{l_n}(x),\tilde v_{l_n}(x))=(\tilde u_0(x),\tilde v_0(x))
 $$
 locally uniformly in $x$ in bounded subsets of $\RR$. Thus, by (1) and \eqref{aaux-eq2},
 $$
 u(u_1T,x_n;\tilde u_{l_n},\tilde v_{l_n})\ge \eta/2,\quad {\rm and}\quad v(n_1T,x_n;\tilde u_{l_n},\tilde v_{l_n})\ge\eta/2\quad \forall\,\, n\gg 1,
 $$
 which contradicts to \eqref{new-aux-eq1}.

 Hence $\{x_n\}$ is unbounded. Without loss of generality, we assume that $\lim_{n\to\infty}x_n=\infty$.  Let
 $$
 (\tilde u_n(t,x),\tilde v_n(t,x))=(u(t,x+x_n;\tilde u_{l_n},\tilde v_{l_n}),v(t,x+x_n;\tilde u_{l_n},\tilde v_{l_n})).
 $$
 Then $(\tilde u_n(t,x),\tilde v_n(t,x))$ is the solution of \eqref{main-eq2-1} with $a_i(t,x)$, $b_i(t,x)$, $c_i(t,x)$ ($i=1,2$),
 and $v^*(t,x)$ being replaced by $a_i(t,x+x_n)$, $b_i(t,x+x_n)$, $c_i(t,x+x_n)$, and $v^*(t,x+x_n)$, respectively,
 and with $(\tilde u_n(0,x),\tilde v_n(0,x))=(\tilde u_{l_n}(x+x_n),\tilde v_{l_n}(x+x_n))$.
 Note that
 $$
 \lim_{n\to\infty} a_i(t,x+x_n)=a_i^0(t),\,\,\, \lim_{n\to\infty} b_i(t,x+x_n)=b_i^0(t),\,\, \lim_{n\to\infty} c_i(t,x+x_n)=c_i^0(t),
 $$
 and
 $$
 \lim_{n\to\infty} v^*(t,x+x_n)=v_0^*(t), \,\, \lim_{n\to\infty} (\tilde u_{l_n}(x+x_n),\tilde v_{l_n}(x+x_n))=(\tilde u_0(x),\tilde v_0(x))
 $$
 locally uniformly in $x\in\RR$. By  the arguments  in \cite[Lemma 3.2]{KoRaSh},
 $$
 \lim_{n\to\infty}(\tilde u_n(t,x),\tilde v_n(t,x))=(u_0(t,x;\tilde u_0,\tilde v_0),v_0(t,x;\tilde u_0,\tilde v_0))
 $$
 locally uniformly in $t$ in bounded subsets of $\RR^+$ and in $x$ in bounded subsets of $\RR$. Then by \eqref{aaux-eq2-1},
 \begin{equation}
 \label{new-aux-eq2}
 \begin{cases}
 u(n_1T,x_n;\tilde u_{l_n},\tilde v_{l_n})=\tilde u_n(n_1T,0)\ge \eta/2,\quad \forall\,\, n\gg 1\cr
 v(n_1T,x_n;\tilde u_{l_n},\tilde v_{l_n})=\tilde v_n(n_1T,0)\ge \eta/2,\quad \forall\,\, n\gg 1.
 \end{cases}
 \end{equation}
This contradicts to \eqref{new-aux-eq1} again.
Therefore, the claim holds.

Fix any $0<c^{'}< c$. Let $n^*\ge \max\{n_0,n_1\}$ be such that $cn^*T-c^{'}n^*T\ge L_0$. Then by \eqref{aaux-eq1} and \eqref{aaux-eq3}, we have
$$
u((n^*+kn_1)T,x;u_0,v_0)\ge \eta/2,\quad v((n^*+kn_1)T,x;u_0,v_0)\ge \eta/2\quad \forall\,\, k\ge 1,\,\, x\le c^{'}(n^*+(k-1)n_1)T.
$$
This implies that
$$
\limsup_{x\le c^{'}t,t\to\infty}u(t,x;u_0,v_0)>0,\quad \liminf_{x\le c^{'}t, t\to\infty}v(t,x;u_0,v_0)>0.
$$
This proves (2).
\end{proof}

\subsection{Spreading speeds for unperturbed systems and the proof of Theorem \ref{main-thm0}}

In this subsection, we present some properties of spreading speeds for the unperturbed system
 \eqref{main-eq1} and prove Theorem \ref{main-thm0}.

\begin{proof}[Proof of Theorem \ref{main-thm0}]   Let
$(0,v_n^*(t))$ be the semitrivial solution of \eqref{main-eq2} with $a_i=a_i^n$, $b_i=b_i^n$, and $c_i=c_i^n$.

(1) Fix any $c<c_0^*(a_1^0,b_1^0,c_1^0,a_2^0,b_2^0,c_2^0)$.  For any  $(u_0,v_0)\in X_1^+\times X_2^+$ satisfying  that
$u_0(x)<u_0^*(0)$ and $v_0(x)<v_0^*(0)$ for $x\in\RR$, and
 $u_0(x)=0$ and $v_0(x)=0$ for $x\ge 0$,  let $(u^0(t,x;u_0,v_0)$, $v^0(t,x;u_0,v_0))$ be
the solution of \eqref{main-eq1-1} with   $(u^0(0,x;u_0,v_0),v^0(0,x;u_0,v_0))=(u_0(x),v_0(x))$. Let
 $(u^n(t,x;u_0,v_0),v^n(t,x;u_0,v_0))$ be the solution of \eqref{main-eq1-1} with $a_i^0$, $b_i^0$,  $c_i^0$, and $v_0^*(t)$  being replaced
 by $a_i^n$, $b_i^n$,  $c_i^n$, and $v_n^*(t)$,  respectively ($i=1,2$), and  with $(u^n(0,x;u_0,v_0),v^n(0,x;u_0,v_0))=(u_0(x),v_0(x))$.

By Proposition \ref{unperturbed-prop},  for any $0<\epsilon\ll 1$ with $\sup u_0\le  u_0^*(0)-2\epsilon$ and $\sup v_0\le v_0^*(0)-2\epsilon$, there is $K\ge 1$ such that
$$
u^0(t,x;u_0,v_0)\ge u_0^*(t)-\epsilon,\quad v^0(t,x;u_0,v_0)\ge v_0^*(t)-\epsilon\quad \forall\,\,  x\le ct,\,\,\, t\ge KT.
$$
Note that  there is $N\ge 1$ such that for $n\ge N$,
$$
u^n(t,x;u_0,v_0)\ge u^0(t,x;u_0,v_0)-\epsilon,\quad v^n(t,x;u_0,v_0)\ge v^0(t,x;u_0,v_0)-\epsilon\quad \forall\,\,  x\in\RR,\,\, 0\le t\le KT
$$
and
$$
u_n^*(t)\le u_0^*(t)+\epsilon,\quad v_n^*(t)\le v_0^*(t)+\epsilon\quad \forall\,\, t\in\RR.
$$
It then follows that  for $n\ge N$,
\begin{equation}
\label{aux-comp-eq}
\sup u_0\le u_n^*(0)-\epsilon,\quad \sup v_0\le v_n^*(0)-\epsilon
\end{equation}
and
\begin{equation}
\label{spreading-eq1}
u^n(KT,x;u_0,v_0)\ge u_0^*(KT)-2\epsilon,\quad v^n(KT,x;u_0,v_0)\ge v_0^*(KT)-2\epsilon \quad \forall\,\, x\le cKT.
\end{equation}
This implies that  for $n\ge N$,
\begin{equation}
\label{homogeneous-eq1}
u^n(KT,x+cKT;u_0,v_0)\ge u_0(x),\quad v^n(KT,x+cKT;u_0,v_0)\ge v_0(x)\quad \forall\,\, x\in\RR.
\end{equation}

By \eqref{aux-comp-eq}, \eqref{homogeneous-eq1} and  Proposition \ref{comparison-prop},
$$
u^n(t+KT,x+cKT;u_0,v_0)\ge u^n(t,x;u_0,v_0),\quad v^n(t+KT,x+cKT;u_0,v_0)\ge v^n(t,x;u_0,v_0)$$
for all $t\ge 0$, $x\in\RR$, and $n\ge N$. This together with \eqref{spreading-eq1} implies that for $n\ge N$,
$$
u^n(2KT,x;u_0,v_0)\ge u_0^*(0)-2\epsilon,\quad v^n(2KT,x;u_0,v_0)\ge v_0^*(0)-2\epsilon\quad \forall\,\, x\le 2 cKT.
$$

Continuing the above process, we have that for $n\ge N$,
$$
u^n(kKT,x;u_0,v_0)\ge u_0^*(0)-2\epsilon,\quad v^n(kKT,x;u_0,v_0)\ge v_0^*(0)-2\epsilon\quad \forall\,\, k\ge 1, \,\, x\le kcKT.
$$
This implies that for $n\ge N$,
$$
\liminf_{x\le ct, t\to\infty} u^n(t,x;u_0,v_0)>0,\quad  \liminf_{x\le ct, t\to\infty} v^n(t,x;u_0,v_0)>0.
$$
Hence
$$
c_0^*(a_1^n,b_1^n,c_1^n,a_2^n,b_2^n,c_2^n)\ge c\quad \forall\,\, n\ge N.
$$
This implies that
$$
\liminf_{n\to\infty} c_0^*(a_1^n,b_1^n,c_1^n,a_2^n,b_2^n,c_2^n)\ge c_0^*(a_1^0,b_1^0,c_1^0,a_2^0,b_2^0,c_2^0).
$$

(2) Note that (H0)-(H2) are also satisfied when $a_i^0$, $b_i^0$, and $c_i^0$ are replaced by $a_i^n$, $b_i^n$, and $c_i^n$, respectively
($i=1,2$) for $n\gg 1$. Let $(u_n^*(t),0)$ and $(0,v_n^*(t))$ be the semitrivial solutions of \eqref{main-eq1} with $a_i^0$, $b_i^0$, and $c_i^0$ being replaced by $a_i^n$, $b_i^n$, and $c_i^n$, respectively
($i=1,2$). Then
$$c_0^*(a_1^n,b_1^n,c_1^n,a_2^n,b_2^n,c_2^n)=\inf_{\mu>0}\frac{\lambda(\mu,a_2^n-b_2^n v_n^*)}{\mu}\quad \forall\,\, n\gg 1.
$$
Note also that $\lambda(\mu,a_2^n-b_2^n v_n^*)\to \lambda(\mu,a_2^0-b_2^0v_0^*)$ as $n\to\infty$ uniformly in $\mu$ in bounded sets. It then follows
that
$$
\lim_{n\to\infty}\inf_{\mu>0}\frac{\lambda(\mu,a_2^n-b_2^n v_n^*)}{\mu}=\inf_{\mu>0}\frac{\lambda(\mu,a_2^0-b_2^0 v_0^*)}{\mu}.
$$
Hence
$$
\lim_{n\to\infty} c_0^*(a_1^n,b_1^n,c_1^n,a_2^n,b_2^n,c_2^n)=c_0^*(a_1^0,b_1^0,c_1^0,a_2^0,b_2^0,c_2^0).
$$
\end{proof}

\subsection{Spreading speeds for perturbed systems}

In this subsection, we investigate the spreading speeds for the perturbed system \eqref{main-eq2} and prove Theorem \ref{main-thm1}.

First, we prove  Theorem \ref{main-thm1} (1).

\begin{proof}[Proof of Theorem \ref{main-thm1} (1)]
 It suffices to prove that $c_{\inf}^*\ge c_0^*$. Let $u_0\in X_1^+$ and $v_0\in X_2^+$ be such that
$u_0(x)=v_0(x)=0$ for $x\ge 0$ and $u_0(x)\ll \frac{1}{2}\min\{\inf u^*(t,x),\inf u_0^*(t)\}$,
$v_0(x)\ll \frac{1}{2} \min\{\inf v^*(t,x),\inf v_0^*(t)\}$ for $x<0$.

First of all, consider
\begin{equation}
\label{single-eq1}
u_t=\mathcal{A}u+u\Big(a_1(t,x)-b_1(t,x)u-c_1(t,x)v^*(t,x)\Big),\quad x\in\RR.
\end{equation}
Let
$$
\tilde c^*=\inf_{\mu>0}\frac{\lambda(\mu,a_1^0-c_1^0 v_0^*)}{\mu}.
$$
Fix $0<c<c_0^*$ and $0<\tilde c<\min\{c,\tilde c^*\}$.
By \cite[Theorem 2.2]{KoSh},
$$
\liminf_{x\le \tilde c t,t\to\infty} u(t,x;u_0)>0,
$$
where $u(t,x;u_0)$ is the solution of \eqref{single-eq1} with $u(0,x;u_0)=u_0(x)$.
Then  by Lemma \ref{spreading-lm2} and its arguments,
 there is $T_0>0$ such that
\begin{equation}
\label{spreading-eq2-1}
u(t,x;u_0,v_0)\ge \sup u_0,\quad v(t,x;u_0,v_0)\ge \sup v_0\quad \forall\,\, t\ge T_0,\,\, x\le \tilde c t.
\end{equation}

Next, note that
$$
\lim_{L\to\infty} \big(a_i(t,x+L)-a_i^0(t))=0,\quad \lim_{L\to\infty}\big(b_i(t,x+L)-b_i^0(t)\big)=0,\quad \lim_{L\to\infty}\big(c_i(t,x+L)-c_i^0(t)\big)=0
$$
and
$$
\lim_{L\to\infty} \big(u^*(t,x+L)-u_0^*(t)\big)=0,\quad \lim_{L\to\infty} \big(v^*(t,x+L)-v_0^*(t)\big)=0
$$
uniformly in $t\in\RR$ and $x$ in bounded sets. This implies that
\begin{equation}
\label{spreading-eq3}
\begin{cases}
\lim_{L\to\infty} \big(u(t,x+L;u_0(\cdot-L),v_0(\cdot-L))-u^0(t,x;u_0,v_0)\big)=0\cr
 \lim_{L\to\infty} \big(v(t,x+L;u_0(\cdot-L),v_0(\cdot-L))-
v^0(t,x;u_0,v_0))=0
\end{cases}
\end{equation}
uniformly in $t$ in bounded sets of $[0,\infty)$ and $x$ in bounded sets.
Note also that
\begin{equation}
\label{spreading-eq4}
\lim_{x\le ct,t\to\infty}\Big(|u^0(t,x;u_0,v_0)-u_0^*(t)|+|v^0(t,x;u_0,v_0)-v_0^*(t)|\Big)=0.
\end{equation}
This implies that there is $K\in\NN$ such that
\begin{equation}
\label{spreading-eq4-1}
u^0(KT,x;u_0,v_0)>2u^*_0(KT)/3,\quad v^0(KT,x;u_0,v_0)> 2v^*_0(KT)/3\quad \forall\,\, x\le cKT.
\end{equation}

Now, by \eqref{spreading-eq3} and \eqref{spreading-eq4-1}, there is $L_0>0$ such that
\begin{equation*}
\begin{cases}
u(KT,x+L;u_0(\cdot-L),v_0(\cdot-L))>u^*_0(KT)/2,\quad \forall \, \, L\ge L_0,\,\, |x|\le cKT\cr
 v(KT,x+L;u_0(\cdot-L),v_0(\cdot-L))>v^*_0(KT)/2,\quad \forall\,\, L\ge L_0,\,\, |x|\le cKT.
 \end{cases}
\end{equation*}
Hence
\begin{equation}
\label{spreading-eq4-2}
\begin{cases}
u(KT,x;u_0(\cdot-L),v_0(\cdot-L))>u^*_0(KT)/2,\quad \forall \, \, L\ge L_0,\,\, |x-L|\le cKT\cr
 v(KT,x;u_0(\cdot-L),v_0(\cdot-L))>v^*_0(KT)/2,\quad \forall\,\, L\ge L_0,\,\, |x-L|\le cKT.
 \end{cases}
\end{equation}

Let $K_0\in\NN$ be such that $K_0T\ge \max\{T_0,L_0\}$.
By \eqref{spreading-eq4-2}, we have
\begin{equation}
\begin{cases}
\label{spreading-eq4-3}
u((K_0+K)T,x;u_0,v_0)\ge u(KT,x;u_0(\cdot-\tilde c K_0T),v_0(\cdot-\tilde c K_0T)\ge u^*(KT)/2\cr
v((K_0+K)T,x;u_0,v_0)\ge v(KT,x;u_0(\cdot-\tilde c K_0T),v_0(\cdot-\tilde c K_0T)\ge v^*(KT)/2
\end{cases}
\end{equation}
for all $|x-\tilde c K_0T|\le cKT$.
By \eqref{spreading-eq2-1}, we have
$$
\begin{cases}
u((K_0+K)T,x;u_0,v_0)\ge \sup u_0\quad \forall\,\, x\le \tilde cK_0T+\tilde cKT\cr
v((K_0+K)T,x;u_0,v_0)\ge \sup v_0 \quad \forall\,\, x\le \tilde c K_0T+\tilde c KT.
\end{cases}
$$
It then follows that
\begin{equation}
\label{spreading-eq4-4}
\begin{cases}
u((K_0+K)T,x;u_0,v_0)\ge \sup u_0\quad \forall\,\, x\le \tilde c K_0T+ cKT\cr
v((K_0+K)T,x;u_0,v_0)\ge \sup v_0\quad \forall\,\,  x\le \tilde  cK_0T+cKT.
\end{cases}
\end{equation}

By \eqref{spreading-eq4-2} and \eqref{spreading-eq4-4}, we have
\begin{align*}
u((K_0+2K)T,x;u_0,v_0)&= u(KT,x;u((K_0+K)T,\cdot;u_0,v_0),v((K_0+K)T,\cdot;u_0,v_0))\\
&\ge u(KT,x; u_0(\cdot-\tilde c K_0T-c i K T),v_0(\cdot-\tilde c K_0 T-c i KT))\\
&\ge \sup u_0\quad \forall \,\, |x-\tilde c i K_0 T|\le  c KT
\end{align*}
and
\begin{align*}
v((K_0+2K)T,x;u_0,v_0)&= v(KT,x;u((K_0+K)T,\cdot;u_0,v_0),v((K_0+K)T,\cdot;u_0,v_0))\\
&\ge v(KT,x; u_0(\cdot-\tilde c K_0T-c i K T),v_0(\cdot-\tilde c K_0 T-c i KT))\\
&\ge \sup v_0\quad \forall \,\, |x-\tilde c i K_0 T|\le  c KT
\end{align*}
for $i=0,1$.
It then follows that
\begin{equation}
\label{spreading-eq4-5}
\begin{cases}
u((K_0+2K)T,x;u_0,v_0)\ge \sup u_0\quad \forall\,\, \tilde cK_0T-cKT\le x\le \tilde c K_0T+2cKT\cr
v((K_0+2K)T,x;u_0,v_0)\ge \sup v_0\quad \forall\,\, \tilde cK_0T-cKT\le x\le \tilde c K_0T+2cKT.
\end{cases}
\end{equation}
This together with \eqref{spreading-eq2-1} implies that
\begin{equation*}
\begin{cases}
u((K_0+2K)T,x;u_0,v_0)\ge \sup u_0\quad \forall \,\, x\le \tilde c K_0T+2cKT\cr
v((K_0+2K)T,x;u_0,v_0)\ge \sup v_0\quad \forall \,\, x\le \tilde  cK_0T+2cKT.
\end{cases}
\end{equation*}

By induction, we have
\begin{equation*}
\begin{cases}
u((K_0+nK)T,x;u_0,v_0)\ge \sup u_0\quad \forall\,\, x\le \tilde c K_0T+ncKT\cr
v((K_0+nK)T,x;u_0,v_0)\ge \sup v_0\quad \forall\,\, x\le \tilde  cK_0T+ncKT
\end{cases}
\end{equation*}
for all $n\ge 1$.
 It then follows that
$$
c_{\inf}^*\ge c\quad \forall\,\, c<c_0^*.
$$
This implies that $
c_{\inf}^*\ge c_0^*.$
\end{proof}

\begin{proof}[Proof of Theorem \ref{main-thm1} (2)]
Assume that $M_0>0$ is such that $a_i(t,x)=a_i^0(t)$, $b_i(t,x)=b_i^0(t)$,
and $c_i(t,x)=c_i^0(t)$ for $|x|\ge M_0$. By Theorem \ref{main-thm1} (1), it suffices to prove that
$c_{\sup}^*\le c_0^*$.

To prove $c_{\sup}^*\le c_0^*$, first of all,
 for given small $\epsilon>0$, consider the following perturbed system of \eqref{main-eq1-1}
\begin{equation}
\label{main-eq1-2}
\begin{cases}
u_t=\mathcal{A}u+u\big(a_1^\epsilon(t)-b_1^\epsilon(t)u-c_1^\epsilon(t)(v_0^*(t)-v)\big),\quad x\in\RR\cr
v_t=\mathcal{A}v+b_2^\epsilon(t)\big(v^*_0(t)-v\big)u+v\big(a_2^\epsilon(t)-2c_2^\epsilon(t)v^*_0(t)+c_2^\epsilon(t)v\big),\quad x\in\RR,
\end{cases}
\end{equation}
where $a_1^\epsilon(t)=a_1^0(t)+\epsilon$, $b_1^\epsilon(t)=b_1^0(t)-\epsilon$, $c_1^\epsilon(t)=c_1^0(t)-\epsilon$,
and $b_2^\epsilon(t)=b_2^0(t)+\epsilon$, $a_2^\epsilon(t)=a_2^0(t)+\epsilon+2\epsilon\sup v_0^*$, $c_2^\epsilon(t)=c_2^0(t)+\epsilon$.
Consider the linearization of \eqref{main-eq1-2} at $(0,0)$,
\begin{equation}
\label{main-eq1-4}
\begin{cases}
u_t=\mathcal{A}u+\big(a_1^\epsilon(t)-c_1^\epsilon(t)v_0^*(t))\big)u,\quad x\in\RR\cr
v_t=\mathcal{A}v+b_2^\epsilon(t)v^*_0(t)u+\big(a_2^\epsilon(t)-2c_2^\epsilon(t)v^*_0(t)\big)v,\quad x\in\RR.
\end{cases}
\end{equation}
For given $\mu>0$, let $\lambda_\epsilon(\mu)=\lambda(\mu,a_1^\epsilon-c_1^\epsilon v_0^*)$ and $\mu_\epsilon^*>0$ be such that
$$
\frac{\lambda_\epsilon(\mu_\epsilon^*)}{\mu_\epsilon^*}=\inf_{\mu>0}\frac{\lambda_\epsilon(\mu)}{\mu}.
$$
Let $c_\epsilon^*=\frac{\lambda_\epsilon(\mu_\epsilon^*)}{\mu_\epsilon^*}$. Note that $c_\epsilon^*\ge c_0^*$ and
$c_\epsilon^*\to c_0^*$ as $\epsilon\to 0$. To prove $c_{\sup}^*\le c_0^*$, it then suffices to prove $c_{\sup}^*\le c_\epsilon^*$ for any given
$0<\epsilon\ll 1$.

 Next, by (H2),
 \begin{equation}
 \label{claim-eq0}
 \begin{cases}
 a^\epsilon_1(t)-c^\epsilon_1(t)\frac{a^0_{2M}}{c^0_{2L}}-a^\epsilon_2(t)+2c^\epsilon_2 (t)\frac{a^0_{2L}}{c^0_{2M}}
-b^\epsilon_2(t)\frac{a^0_{2M}}{c^0_{2L}}\frac{c^\epsilon_{1M}}{b^\epsilon_{1L}}> 0\cr
 a^\epsilon_1(t)-c^\epsilon_1(t)\frac{a^0_{2M}}{c^0_{2L}}-a^\epsilon_2(t)+2c^\epsilon_2 (t)\frac{a^0_{2L}}{c^0_{2M}}-b^\epsilon_2(t)\frac{a^0_{2M}}{c^0_{2L}}\frac{c^\epsilon_{2M}}{b^\epsilon_{2L}}> 0
 \end{cases}
 \end{equation}
  for all $t\in\RR$ and $0<\epsilon\ll 1$. Fix $\epsilon>0$ such that \eqref{claim-eq0} holds.
  We prove $c_{\sup}^*\le c_\epsilon^*$. By Propositions \ref{principal-spectrum-prop1} and \ref{nonhomogeneous-prop},
  there are positive $T$-periodic functions $\phi_\epsilon^*(t)$ and
$\psi_\epsilon^*(t)$ such that
$(u,v)=(e^{-\mu_\epsilon^*(x-c_\epsilon^*t)}\phi_\epsilon^*(t),e^{-\mu_\epsilon^*(x-c_\epsilon^*t)}\psi_\epsilon^*(t))$ is a solution of \eqref{main-eq1-4}. Let
$$
u^+(t,x)=Ke^{-\mu_\epsilon^*(x-c_\epsilon^*t)}\phi_\epsilon^*(t),\quad
v^+(t,x)=Ke^{-\mu_\epsilon^*(x-c_\epsilon^*t)}\psi_\epsilon^*(t),
$$
where $K$  is a positive constant to be determined later.
We claim that
\begin{equation}
\label{main-eq1-5}
c_1^\epsilon(t)v^+(t,x)\le b_1^\epsilon(t) u^+(t,x),\quad c_2^\epsilon(t)v^+(t,x)\le b_2^\epsilon(t)u^+(t,x).
\end{equation}
We first assume the claim \eqref{main-eq1-5} is true and finish the proof of $c_{\sup}^*\le c_\epsilon^*$ and then prove the claim.

To prove $c_{\sup}^*\le c_\epsilon^*$, it suffices to prove that there is $L^*>0$ such that
\begin{equation}
\label{main-eq1-5-1}
u(t,x;u_0,v_0)\le u^+(t,x),\quad v(t,x;u_0,v_0)\le v^+(t,x)\quad \forall\,\, t\ge 0,\, \, x\ge c_\epsilon^* t+L^*.
\end{equation}
In order to do so,
let
$$
M^*=\max\{\sup_{t\in\RR}u_0^*(t), \sup_{t\in\RR}v_0^*(t),\sup_{t,x\in\RR} u^*(t,x),\sup_{t,x\in\RR}v^*(t,x)\}
$$
and
$$
m^*=\inf_{t\in\RR}\frac{\psi_\epsilon^*(t)}{\phi_\epsilon^*(t)},
$$
and $k\in\NN$ be such that
$$
km^*\ge 1.
$$
Let $\xi^*(t;K)$ be defined by
$$
u^+(t,\xi^*(t;K))=k M^*.
$$
Then
\begin{equation}
\label{x-star-eq}
\xi^*(t;K)=c_\epsilon^* t-\frac{1}{\mu_\epsilon^*}\ln \Big(\frac{kM^*}{K\phi_\epsilon^*(t)}\Big),
\end{equation}
and
$$
v^+(t,\xi^*(t;K))=kM^*\frac{\psi_\epsilon^*(t)}{\phi_\epsilon^*(t)}\ge k m^* M^*\ge M^*.
$$
Note that
$$
u^+(t,x)\le kM^*, \quad v^+(t,x)\le kM^*\frac{\psi_\epsilon^*(t)}{\phi_\epsilon^*(t)}\quad \forall\,\, x\ge \xi^*(t;K)
$$
and
$$
u^+(t,x)\ge  kM^*, \quad v^+(t,x)\ge kM^*\frac{\psi_\epsilon^*(t)}{\phi_\epsilon^*(t)}\quad \forall\,\, x\le \xi^*(t;K).
$$
Let
$$
K^*=kM^* \cdot \sup_{t\in\RR} b_2^\epsilon(t)
$$
and $g_1(u)$  be  a nondecreasing Lipschitz continuous  function satisfying that
$$
g_1(v)=\begin{cases} v\quad {\rm for}\quad v\le M^*\cr
M^*\quad {\rm for}\quad v\ge M^*.
\end{cases}
$$
Then
\begin{equation}
\label{main-eq1-6}
b_2^\epsilon\frac{|v_0^*(t)-g_1(v^+(t,x))|-\big(v_0^*(t)-g_1(v^+(t,x))\big)}{2} u^+(t,x)-K^*|v_0^*(t)-g_1(v^+(t,x))|\le 0
\end{equation}
for $x\ge \xi^*(t;K)$.
Let
$$
F_\epsilon(t,u,v)=u\big(a_1^\epsilon(t)-b_1^\epsilon(t)u-c_1^\epsilon(t)(v^*_0(t)-g_1(v))\big)
$$
and
\begin{align*}
G_\epsilon(t,u,v)&=b_2^\epsilon(t)\big(v^*_0(t)-g_1(v)\big)u+v\big(a_2^\epsilon(t)-2c_2^\epsilon(t)v^*_0(t)+c_2^\epsilon(t)g_1(v)\big)\\
& \quad + b_2^\epsilon(t)\frac{|v^*_0(t)-g_1(v)|-(v^*_0(t)-g_1(v))}{2} u-K^* |v_0^*(t)-g_1(v)|.
\end{align*}
By \eqref{main-eq1-5} and \eqref{main-eq1-6}
\begin{equation}
\label{main-eq1-7}
\begin{cases}
u_t^+\ge \mathcal{A}u^+ + F_\epsilon(t,x,u^+,v^+),\quad x\ge \xi^*(t;K)\cr
v_t^+\ge \mathcal{A} v^+ +G_\epsilon(t,x,u^+,v^+),\quad x\ge \xi^*(t;K).
\end{cases}
\end{equation}

Let $u(t,x)=u(t,x;u_0,v_0)$, $v(t,x)=v(t,x;u_0,v_0)$. Let
$$
F(t,u,v)=u\big(a_1(t)-b_1(t)u-c_1(t)(v^*(t,x)-g_1(v))\big)
$$
and
\begin{align*}
G(t,u,v)&=b_2(t)\big(v^*(t,x)-g_1(v)\big)u+v\big(a_2(t)-2c_2(t)v^*(t,x)+c_2(t)g_1(v)\big)\\
& \quad + b_2(t)\frac{|v^*(t,x)-g_1(v)|-(v^*(t,x)-g_1(v))}{2} u-K^* |v^*(t,x)-g_1(v)|.
\end{align*}
Note that
$$
u(t,x)\le u^*(t,x),\quad v(t,x)\le v^*(t,x)\quad \forall \,\, t\ge 0,\,\, x\in\RR.
$$
Hence
\begin{equation}
\label{main-eq2-2-1}
\begin{cases}
u_t = \mathcal{A} u+F(t,x,u,v),\quad x\in\RR\cr
v_t = \mathcal{A}v+G(t,x,u,v), \quad x\in\RR.
\end{cases}
\end{equation}

Note  that
$$
\lim_{K\to\infty} \xi^*(t;K)=\infty
$$
uniformly in $t\ge 0$ and
$$
\lim_{|x|\to \infty} |u^*(t,x)-u_0^*(t)|=\lim_{|x|\to\infty}|v^*(t,x)-v_0^*(t)|=0
$$
uniformly in $t\in\RR$. We can then
choose  $K\gg 1$ such that
$$
u_0(x)\le u^+(0,x),\quad v_0(x)\le v^+(0,x)\quad \forall\,\, x\in\RR,
$$
and
$$
\begin{cases}
F(t,x,u(t,x),v(t,x))\le F_\epsilon(t,x,u(t,x),v(t,x)),\quad \forall\,\, t\ge 0,\,\, x\ge \xi^*(t;K)\cr
 G(t,x,u(t,x),v(t,x))\le G_\epsilon(t,x,u(t,x),v(t,x))\quad \forall\,\, t\ge 0,\quad x\ge \xi^*(t;K).
 \end{cases}
$$
The last two inequalities  together with \eqref{main-eq2-2-1} imply that
\begin{equation}
\label{main-eq2-2}
\begin{cases}
u_t(t,x)\le \mathcal{A} u(t,x)+F_\epsilon(t,x,u(t,x),v(t,x)),\quad x\ge \xi^*(t;K)\cr
v_t(t,x) \le \mathcal{A}v(t,x)+G_\epsilon(t,x,u(t,x),v(t,x)),\quad x\ge \xi^*(t;K).
\end{cases}
\end{equation}

Note that
$$
u^+(t,x)\ge M^*,\quad v^+(t,x)\ge M^* \quad \forall\,\, x\le \xi^*(t;K),
$$
and
$$
u(t,x)\le M^*,\quad v(t,x)\le M^*\quad \forall\,\, x\le \xi^*(t;K).
$$
Then by \eqref{main-eq1-6}, \eqref{main-eq2-1}, and Proposition \ref{comparison-cooperative-prop},
$$
u(t,x)\le u^+(t,x),\quad v(t,x)\le v^+(t,x)\quad \forall\,\, t\ge 0,\,\, x\ge \xi^*(t;K).
$$
This implies that
$$
c_{\rm sup}^*\le c_\epsilon^*.
$$
Then by Theorem \ref{main-thm1} (1), we have $c_{\inf}^*=c_{\sup}^*=c_0^*$.

We return to prove the claim \eqref{main-eq1-5} now.    By the definition of $u^+(t,x)$ and $v^+(t,x)$, it suffices to prove
\begin{equation}
\label{main-eq1-5-new}
c_1^\epsilon(t)\psi^*(t)\le b_1^\epsilon(t)\phi^*(t),\quad c_2^\epsilon(t) \psi^*(t)\le b_2^\epsilon(t)\phi^*(t)\quad \forall\,\, t\in\RR.
\end{equation}
Observe that $(\phi^*(t),\psi^*(t))$ satisfying the following system
\begin{equation}
\label{claim-eq1}
\begin{cases}
u_t=\mathcal{A}(\mu_\epsilon^*)u-\lambda(\mu_\epsilon^*)u+\big(a_1^\epsilon(t)-c_1^\epsilon(t)v_0^*(t))\big)u,\quad x\in\RR\cr
v_t=\mathcal{A}(\mu_\epsilon^*)v-\lambda(\mu_\epsilon^*) v+b_2^\epsilon(t)v^*_0(t)u+\big(a_2^\epsilon(t)-2c_2^\epsilon(t)v^*_0(t)\big)v,\quad x\in\RR.
\end{cases}
\end{equation}
Hence $v=\psi^*(t)$ is a positive periodic solution of
\begin{equation}
\label{claim-eq2}
v_t=\mathcal{A}(\mu_\epsilon^*)v-\lambda(\mu_\epsilon^*) v+b_2^\epsilon(t)v^*_0(t)\phi^*(t)+\big(a_2^\epsilon(t)-2c_2^\epsilon(t)v^*_0(t)\big)v,\quad x\in\RR.
\end{equation}
We show that both $v=\frac{b_{1L}^\epsilon}{c_{1M}^\epsilon}\phi^*(t)$ and  $v=\frac{b_{2L}^\epsilon}{c_{2M}^\epsilon}\phi^*(t)$ are super-solutions of \eqref{claim-eq2}. In fact,
\begin{align*}
&\Big(\frac{b_{1L}^\epsilon}{c_{1M}^\epsilon}\phi^*(t)\Big)_t-\mathcal{A}(\mu_\epsilon^*)\frac{b_{1L}^\epsilon}{c_{1M}^\epsilon}\phi^*(t)
+\lambda(\mu_\epsilon^*) \frac{b_{1L}^\epsilon}{c_{1M}^\epsilon}\phi^*(t)-b_2^\epsilon(t)v^*_0(t)\phi^*(t)- \big(a_2^\epsilon(t)-2c_2^\epsilon(t)v^*_0(t)\big)\frac{b_{1L}^\epsilon}{c_{1M}^\epsilon}\phi^*(t)\\
&=(a_1^\epsilon(t)-c_1^\epsilon(t)v_0^*(t))\frac{b_{1L}^\epsilon}{c_{1M}^\epsilon}\phi^*(t)-b_2^\epsilon(t)v^*_0(t)\phi^*(t)- \big(a_2^\epsilon(t)-2c_2^\epsilon(t)v^*_0(t)\big)\frac{b_{1L}^\epsilon}{c_{1M}^\epsilon}\phi^*(t)\\
&=\frac{b_{1L}^\epsilon}{c_{1M}^\epsilon}\phi^*(t)\Big[ (a_1^\epsilon(t)-c_1^\epsilon(t)v_0^*(t))-b_2^\epsilon(t)v^*_0(t)\frac{c_{1M}^\epsilon}{b_{1L}^\epsilon}- \big(a_2^\epsilon(t)-2c_2^\epsilon(t)v^*_0(t)\big) \Big]\\
&\ge \frac{b_{1L}^\epsilon}{c_{1M}^\epsilon}\phi^*(t)\Big[ a_1^\epsilon(t)-c_1^\epsilon(t)\frac{a_{2M}^0}{c_{2L}^0}-b_2^\epsilon(t)\frac{a_{2M}^0}{c_{2L}^0}\frac{c_{1M}^\epsilon}{b_{1L}^\epsilon}- a_2^\epsilon(t)+2c_2^\epsilon(t)\frac{a_{2L}^0}{c_{2M}^0} \Big]\\
&>0\qquad\qquad \text{(by \eqref{claim-eq0})}.
\end{align*}
Hence $v=\frac{b_{1L}^\epsilon}{c_{1M}^\epsilon}\phi^*(t)$ is super-solution of \eqref{claim-eq2}.
Similarly, we have that
 $v=\frac{b_{2L}^\epsilon}{c_{2M}^\epsilon}\phi^*(t)$ is super-solution of \eqref{claim-eq2}.  By Proposition \ref{nonhomogeneous-prop},
 $v=\psi^*(t)$ is globally stable with respect to any perturbation $v_0(x)\equiv $constant. We then have
 $$
\psi^*(t)\le \frac{b_{1L}^\epsilon}{c_{1M}^\epsilon}\phi^*(t),\quad \psi^*(t)\le \frac{b_{2L}^\epsilon}{c_{2M}^\epsilon}\phi^*(t),
$$
which implies \eqref{claim-eq1} and then \eqref{main-eq1-5}.
\end{proof}


\begin{thebibliography}{99}


\bibitem{Bao2013} X. Bao and Z.-C. Wang, Existence and stability of time periodic traveling waves for a periodic bistable Lotka-Volterra competition system, {\it J. Differential Equations} \textbf{255} (2013) 2402-2435.

\bibitem{Bao2015} X. Bao, W.-T. Li and W. Shen, Traveling wave solutions of Lotka-Volterra competition systems with nonlocal dispersal in periodic habitats, {\it J. Differential Equations} \textbf{260} (2016) 8590-8637.

\bibitem{BeHaRo} H. Berestycki, F. Hamel, and L. Rossi, Liouville-type results for semilinear
elliptic equations in unbounded domains,
{\it Annali di Matematica} {\bf 186} (2007), pp. 469-507.



\bibitem{Conley1984} C. Conley and R. Gardner, An application of the generalized Morse index to traveling wave solutions of a competitive reaction diffusion model, {\it Indiana Univ. Math. J.} \textbf{33} (1984) 319-343.



\bibitem{Dunbar1983} S.R. Dunbar, Traveling wave solutions of diffusive Lotka-Volterra equations, {\it J. Math. Biol.} \textbf{17} (1983) 11-32.



\bibitem{Fang2014} J. Fang and X.-Q. Zhao, Traveling waves for monotone semiflows with weak compactness, {\it SIAM J. Math. Anal. }\textbf{46} (2014) 3678-3704.

\bibitem{Fang2015} J. Fang, X. Yu and X.-Q. Zhao, Traveling Waves and Spreading Speeds for Time-Space Periodic Monotone Systems, {\it J. Functional Analysis} \textbf{272} (2017) 4222-4262.



\bibitem{Guo2011} J.-S. Guo and X. Liang, The minimal speed of traveling fronts for Lotka-volterra competition system,
{\it J. Dynam. Differential Equations} \textbf{23} (2011) 353-363.


\bibitem{HeNgSh} G. Hetzer, T. Nguyen, and W. Shen,  Coexistence and extinction in the Volterra-Lotka competition model with nonlocal dispersal, {\it Commun. Pure Appl. Anal.} {\bf 11} (2012), no. 5, 1699-1722.

 \bibitem{HeSh} G. Hetzer and W. Shen, Uniform persistence, coexistence, and extinction in almost periodic/nonautonomous competition
 diffusion systems, {\it SIAM J. Math. Anal.} {\bf 34} (2002), 204-227.





\bibitem{HeShZh} G. Hetzer, W. Shen, and A. Zhang,
Effects of spatial variations and dispersal strategies on principal
eigenvalues of dispersal operators and spreading speeds of
monostable equations,
 {\it Rocky Mountain Journal of Mathematics}, {\bf 43} (2013), pp. 489-513.



\bibitem{Hosono1998} Y. Hosono, The minimal spread of traveling fronts for a diffusive Lotka-Volterra competition model, {\it Bull. Math. Biol.} \textbf{66} (1998) 435-448.

\bibitem{Huang2010}W. Huang, Problem  on minimum wave speed for a Lotka-Volterra reaction diffusion competition model,
{\it J. Dynam. Differential Equations, }  \textbf{22} (2010)
285-297.

\bibitem{Kan1995} Y. Kan-on, Parameter dependence of propagation speed of travelling waves for competition-diffusion equations,
{\it SIAM J. Math. Anal.} \textbf{26} (1995) 340-363.

\bibitem{Kan1997} Y. Kan-on, Fisher wave fronts for the Lotka-Volterra competition model with diffusion,
{\it Nonlinear Anal.} \textbf{28} (1997) 145-164.

 \bibitem{KoSh} L. Kong and W. Shen, Positive stationary solutions and spreading speeds of KPP equations
in locally spatially inhomogeneous media, {\it Methods and Applications of Analysis} {\bf 18} (2011), pp. 427-456.

\bibitem{KoSh1} L. Kong and W. Shen, Liouville type property and spreading speeds of KPP equations in periodic media with localized spatial inhomogeneity,
    {\it  J. Dynam. Differential Equations} {\bf 26} (2014), no. 1, 181-215.

\bibitem{KoRaSh} L. Kong, N. Rawal, and W. Shen, Spreading speeds and linear determinacy for two species competition systems with nonlocal dispersal in periodic habitats,
{\it  Math. Model. Nat. Phenom.} {\bf 10} (2015), no. 6, 113-141.


\bibitem{LeLiWe} M. Lewis, B. Li, and H. Weinberger,
Spreading speed and linear determinacy for two-species competition
models, {\it J. Math. Biol.}, {\bf 45} (2002), pp. 219-233.


\bibitem{LZZ2015} W.-T. Li, L, Zhang and G.-B. Zhang, Invasion entire solutions in a competition system with nonlocal dispersal,
{\it  Discrete Contin. Dyn. Syst.} \textbf{35} (2015) 1531-1560.

\bibitem{NgRa} T. Nguyen and Nar Rawal, Coexistence and Extinction in Time-Periodic Volterra-Lotka
Type Systems with Nonlocal Dispersal, to appear in  {\it Discrete and Continuous Dynamical Systems, Series B}.



\bibitem{RaSh} Nar Rawal and W. Shen, Criteria for the Existence and Lower Bounds of Principal Eigenvalues
of Time Periodic Nonlocal Dispersal Operators and Applications,
{\it J. Dynam. Differential Equations}, {\bf  24} (2012), pp. 927-954.





\bibitem{ShZh1} W. Shen and A. Zhang,  Spreading
speeds for monostable equations with nonlocal dispersal in space
periodic habitats,  {\it Journal of Differential Equations}, {\bf
249} (2010), pp. 749-795.






\bibitem{YuZh} X. Yu and X.-Q. Zhao,  Propagation phenomena for a reaction-advection-diffusion competition model in a periodic habitat,
 {\it J. Dynam. Differential Equations} {\bf 29} (2017), no. 1, 41-66.


\bibitem{Zhao2011}G. Zhao and S. Ruan, Existence, uniqueness and asymptotic stability of time periodic traveling waves for a periodic Lotka-Volterra competition system with diffusion,
{\it J. Math. Pures Appl.} \textbf{95} (2011) 627-671.



\end{thebibliography}
\end{document}